\declaretheorem[numberwithin=section]{theorem}
\declaretheorem[sibling=theorem]{conjecture}
\declaretheorem[sibling=theorem]{corollary}
\declaretheorem[sibling=theorem]{lemma}
\declaretheorem[sibling=theorem]{question}
\declaretheorem[sibling=theorem]{remark}
\declaretheorem[sibling=theorem, style=definition]{definition}
\numberwithin{figure}{section}
\newcommand\blfootnote[1]{%
  \begingroup
  \renewcommand\thefootnote{}\footnote{#1}%
  \addtocounter{footnote}{-1}%
  \endgroup
}
\title{Simplicial complexes from finite projective planes and colored configurations}
\author{Matt Superdock\\\\
\emph{Department of Mathematics and Computer Science}\\
\emph{Rhodes College, Memphis, TN 38112, USA}}
\date{\vspace{-5ex}}
\begin{document}

\maketitle

\blfootnote{\emph{E-mail address:}
  \href{mailto:superdockm@rhodes.edu}{superdockm@rhodes.edu}}
\blfootnote{\emph{Date:}
  March 24, 2022}
\blfootnote{\emph{2020 Mathematics Subject Classification:}
  05B25, 05B30, 05E45, 51E15, 51E30}
\blfootnote{\emph{Key words and phrases:}
  simplicial complex, projective plane, colored configuration, torus}
\blfootnote{Supported by NSF grant DMS 1855591.}

\begin{abstract}
  In the 7-vertex triangulation of the torus, the 14 triangles can be
  partitioned as $T_{1} \sqcup T_{2}$, such that each $T_{i}$ represents the
  lines of a copy of the Fano plane $PG(2, \mathbb{F}_{2})$. We generalize this
  observation by constructing, for each prime power $q$, a simplicial complex
  $X_{q}$ with $q^{2} + q + 1$ vertices and $2(q^{2} + q + 1)$ facets consisting
  of two copies of $PG(2, \mathbb{F}_{q})$.

  Our construction works for any \emph{colored $k$-configuration}, defined as a
  $k$-configuration whose associated bipartite graph $G$ is connected and has a
  $k$-edge coloring $\chi \colon E(G) \to [k]$, such that for all $v \in V(G)$,
  $a, b, c \in [k]$, following edges of colors $a, b, c, a, b, c$ from $v$
  brings us back to $v$. We give one-to-one correspondences between (1) Sidon
  sets of order 2 and size $k + 1$ in groups with order $n$, (2) linear codes
  with radius 1 and index $n$ in the lattice $A_{k}$, and (3) colored $(k +
  1)$-configurations with $n$ points and $n$ lines. (The correspondence between (1) and (2) is known.) As a result, we suggest
  possible topological obstructions to the existence of Sidon sets, and in
  particular, planar difference sets.
\end{abstract}

\section{Introduction}

The torus $T^{2}$ has a 7-vertex triangulation, arising from \cref{fig:torus}.

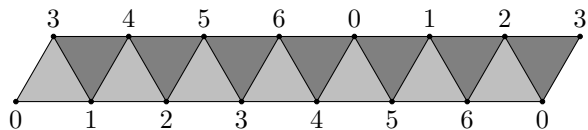
\begin{figure}
\begin{center}
\begin{tikzpicture}
  \path[fill = gray!50] (0, 0) -- (1, 0) -- (0.5, 0.866) -- (0, 0);
  \path[fill = gray!50] (1, 0) -- (2, 0) -- (1.5, 0.866) -- (1, 0);
  \path[fill = gray!50] (2, 0) -- (3, 0) -- (2.5, 0.866) -- (2, 0);
  \path[fill = gray!50] (3, 0) -- (4, 0) -- (3.5, 0.866) -- (3, 0);
  \path[fill = gray!50] (4, 0) -- (5, 0) -- (4.5, 0.866) -- (4, 0);
  \path[fill = gray!50] (5, 0) -- (6, 0) -- (5.5, 0.866) -- (5, 0);
  \path[fill = gray!50] (6, 0) -- (7, 0) -- (6.5, 0.866) -- (6, 0);

  \path[fill = gray] (0.5, 0.866) -- (1, 0) -- (1.5, 0.866) -- (0.5, 0.866);
  \path[fill = gray] (1.5, 0.866) -- (2, 0) -- (2.5, 0.866) -- (1.5, 0.866);
  \path[fill = gray] (2.5, 0.866) -- (3, 0) -- (3.5, 0.866) -- (2.5, 0.866);
  \path[fill = gray] (3.5, 0.866) -- (4, 0) -- (4.5, 0.866) -- (3.5, 0.866);
  \path[fill = gray] (4.5, 0.866) -- (5, 0) -- (5.5, 0.866) -- (4.5, 0.866);
  \path[fill = gray] (5.5, 0.866) -- (6, 0) -- (6.5, 0.866) -- (5.5, 0.866);
  \path[fill = gray] (6.5, 0.866) -- (7, 0) -- (7.5, 0.866) -- (6.5, 0.866);

  \draw[fill] (0, 0) circle [radius = 0.03];
  \draw[fill] (1, 0) circle [radius = 0.03];
  \draw[fill] (2, 0) circle [radius = 0.03];
  \draw[fill] (3, 0) circle [radius = 0.03];
  \draw[fill] (4, 0) circle [radius = 0.03];
  \draw[fill] (5, 0) circle [radius = 0.03];
  \draw[fill] (6, 0) circle [radius = 0.03];
  \draw[fill] (7, 0) circle [radius = 0.03];

  \draw[fill] (0.5, 0.866) circle [radius = 0.03];
  \draw[fill] (1.5, 0.866) circle [radius = 0.03];
  \draw[fill] (2.5, 0.866) circle [radius = 0.03];
  \draw[fill] (3.5, 0.866) circle [radius = 0.03];
  \draw[fill] (4.5, 0.866) circle [radius = 0.03];
  \draw[fill] (5.5, 0.866) circle [radius = 0.03];
  \draw[fill] (6.5, 0.866) circle [radius = 0.03];
  \draw[fill] (7.5, 0.866) circle [radius = 0.03];

  \draw (0, 0) -- (1, 0) -- (2, 0) -- (3, 0)
    -- (4, 0) -- (5, 0) -- (6, 0) -- (7, 0);
  \draw (0.5, 0.866) -- (1.5, 0.866) -- (2.5, 0.866) -- (3.5, 0.866)
    -- (4.5, 0.866) -- (5.5, 0.866) -- (6.5, 0.866) -- (7.5, 0.866);
  \draw (0, 0) -- (0.5, 0.866) -- (1, 0) -- (1.5, 0.866)
    -- (2, 0) -- (2.5, 0.866) -- (3, 0) -- (3.5, 0.866)
    -- (4, 0) -- (4.5, 0.866) -- (5, 0) -- (5.5, 0.866)
    -- (6, 0) -- (6.5, 0.866) -- (7, 0) -- (7.5, 0.866);

  \node [below] at (0, 0) {$0$};
  \node [below] at (1, 0) {$1$};
  \node [below] at (2, 0) {$2$};
  \node [below] at (3, 0) {$3$};
  \node [below] at (4, 0) {$4$};
  \node [below] at (5, 0) {$5$};
  \node [below] at (6, 0) {$6$};
  \node [below] at (7, 0) {$0$};

  \node [above] at (0.5, 0.866) {$3$};
  \node [above] at (1.5, 0.866) {$4$};
  \node [above] at (2.5, 0.866) {$5$};
  \node [above] at (3.5, 0.866) {$6$};
  \node [above] at (4.5, 0.866) {$0$};
  \node [above] at (5.5, 0.866) {$1$};
  \node [above] at (6.5, 0.866) {$2$};
  \node [above] at (7.5, 0.866) {$3$};
\end{tikzpicture}
\caption{The 7-vertex triangulation of $T^{2}$.}
\label{fig:torus}
\end{center}
\end{figure}

To see this, identify any two vertices with the same label, and identify any two
edges whose ends have the same label. Combinatorially, this identification
produces a simplicial complex on 7 vertices. Topologically, this identification
is equivalent to first identifying the leftmost and rightmost edges to obtain a
cylinder, and then identifying the top and bottom circles to obtain $T^{2}$.

This triangulation $X$ of $T^{2}$ has several notable properties:
\begin{itemize}
  \item
  $X$ has exactly 7 vertices. (In fact, $X$ is vertex-minimal; any triangulation
  of $T^{2}$ has at least 7 vertices; see
  \cite{jungerman1980minimal,lutz2005triangulated}.)

  \item
  $X$ contains the Fano plane; the triangles pointing up,
  $$\{0, 1, 3\}, \{1, 2, 4\}, \{2, 3, 5\}, \{3, 4, 6\}, \{4, 5, 0\}, \{5, 6,
  1\}, \{6, 0, 2\},$$
  can be viewed as the lines of the Fano plane on points $\{0, \ldots , 6\}$ (see \cref{fig:fano}).
	\begin{figure}
  \begin{center}
  \begin{tikzpicture}[x = 2cm, y = 2cm]
    \draw[fill] (0, 1.732) circle [radius = 0.03];
    \draw[fill] (-0.5, 0.866) circle [radius = 0.03];
    \draw[fill] (0.5, 0.866) circle [radius = 0.03];
    \draw[fill] (0, 0.577) circle [radius = 0.03];
    \draw[fill] (-1, 0) circle [radius = 0.03];
    \draw[fill] (0, 0) circle [radius = 0.03];
    \draw[fill] (1, 0) circle [radius = 0.03];

    \draw (0, 1.732) -- (-1, 0);
    \draw (0, 1.732) -- (0, 0);
    \draw (0, 1.732) -- (1, 0);
    \draw (-1, 0) -- (0.5, 0.866);
    \draw (1, 0) -- (-0.5, 0.866);
    \draw (-1, 0) -- (1, 0);
    \draw (0, 0.577) circle [radius = 0.577];

    \node [above] at (0, 1.732) {$0$};
    \node [above left] at (-0.5, 0.866) {$1$};
    \node [above right] at (0.5, 0.866) {$2$};
    \node [below left] at (-1, 0) {$3$};
    \node [below] at (0, 0) {$4$};
    \node [below left] at (0, 0.5) {$5$};
    \node [below right] at (1, 0) {$6$};
  \end{tikzpicture}
  \end{center}
	\caption{The Fano plane.}
	\label{fig:fano}
	\end{figure}
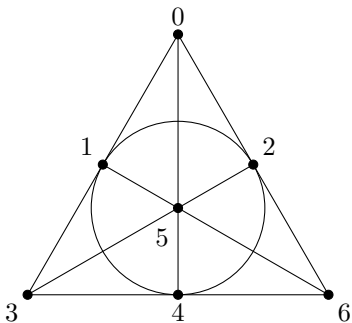
  (Similarly for the triangles pointing down.)

  \item
  $X$ is cyclic; the cyclic group $\mathbb{Z}_{7}$ acts on $X$ by cyclically
  permuting labels.

  \item
  $X$ is 2-neighborly; that is, each pair of vertices in $X$ form an edge.

\end{itemize}

The Fano plane $PG(2, \mathbb{F}_{2})$ is just one of a family of finite
projective planes $PG(2, \mathbb{F}_{q})$ for prime powers $q$. Hence we ask:

\begin{question}
  Does the 7-vertex triangulation of $T^{2}$, along with its notable properties,
  generalize to all prime power dimensions $q$?
\end{question}

Our main result is a construction (\cref{quotient}), which takes as its input a
colored $(k + 1)$-configuration $\mathcal{C}$ (see \cref{colored}), and produces
a simplicial complex $X(\mathcal{C})$ with $\pi_{1}(X(\mathcal{C})) \cong
\mathbb{Z}^{k}$. The complex $X(\mathcal{C})$ is not $T^{k}$ in general, but can
be made homeomorphic to $T^{k}$ by adding vertices and faces.

For example, if $\mathcal{C}$ is $PG(2, \mathbb{F}_{q})$, then we obtain the
following:

\begin{restatable*}{corollary}{pgcomplex}
\label{pg-complex}
  Let $q$ be a prime power. Then there exists a connected $q$-dimensional
  simplicial complex $X_{q}$ with $\pi_{1}(X_{q}) \cong \mathbb{Z}^{q}$, such
  that:
  \begin{itemize}
    \item
    $X_{q}$ has exactly $q^{2} + q + 1$ vertices.

    \item
    $X_{q}$ contains two copies of $PG(2, \mathbb{F}_{q})$, each consisting of
    $q^{2} + q + 1$ facets of $X_{q}$. These two copies fully describe $X_{q}$,
    in that these $2(q^{2} + q + 1)$ facets are all of the facets of $X_{q}$,
    and every face of $X_{q}$ is contained in a facet.

    \item
    $X_{q}$ is cyclic; the cyclic group $\mathbb{Z}_{q^{2} + q + 1}$ acts freely
    on $X_{q}$.

    \item
    $X_{q}$ is 2-neighborly.

  \end{itemize}
\end{restatable*}

In the general construction, the complex $X(\mathcal{C})$ contains one copy of
$\mathcal{C}$, and one copy of its dual $\mathcal{C}^{*}$, which is obtained
from $\mathcal{C}$ by switching points and lines. Since $PG(2, \mathbb{F}_{q})$
is isomorphic to its dual, in this case we obtain two copies of $PG(2,
\mathbb{F}_{q})$. See \cref{construction-planar} for details.

We make no claim of vertex-minimality. However, we note that the smallest known
triangulations of $T^{k}$ use $2^{k + 1} - 1$ vertices
(\cite{kuhnel1988combinatorial}; see~\cite{lutz2005triangulated}). Our complex
$X_{q}$ from \cref{pg-complex} uses fewer vertices but lacks the full structure
of $T^{q}$. We conjecture that $X_{q}$ is vertex-minimal in the following sense:

\begin{restatable*}{conjecture}{minimumcyclic}
\label{minimum-cyclic}
  Suppose $X$ is a simplicial complex on $n$ vertices, such that $\pi_{1}(X)
  \cong \mathbb{Z}^{k}$, and such that $X$ admits a free
  $\mathbb{Z}_{n}$-action. Then $n \ge k^{2} + k + 1$, with equality attainable
  only for prime powers $k$.
\end{restatable*}

This conjecture (along with \cref{planar-colored}) implies that every cyclic
planar difference set has prime power order, an open problem in design theory
(see~\cite{beth1999design}, Chapter~VII). In this way, our work suggests
possible topological obstructions to the existence of planar difference sets and
finite projective planes.

Our construction is closely related to a construction of linear codes from Sidon
sets (see~\cite{kovacevic2019sidon}). In our construction, we assign labels to a
$k$-dimensional lattice and then take a quotient according to that labeling. The
labeling of the lattice can be viewed as a linear code in the lattice $A_{k} =
\{\vec{x} \in \mathbb{Z}^{k + 1} : \sum_{i}x_{i} = 0\}$. This observation leads
to the following correspondences:
\begin{restatable*}{theorem}{correspondence}
\label{correspondence}
  We have one-to-one correspondences (up to isomorphism) between any two of the
  following three structures:
  \begin{enumerate}[(1)]
    \item
    Pairs $(G, B)$, where $G$ is an abelian group with $|G| = n$, and $B$ is a
    Sidon set of order 2 in $G$ with $|B| = k + 1$.

    \item
    Linear codes $\mathcal{L}$ with radius 1 in $A_{k}$, with $|A_{k} /
    \mathcal{L}| = n$.

    \item
    Colored $(k + 1)$-configurations $\mathcal{C}$ with $n$ points and $n$
    lines.

  \end{enumerate}
\end{restatable*}
The relationship between the first two structures above is known
\cite{kovacevic2019sidon}; our contribution is to introduce the third. This
raises the possibility of topological obstructions to the existence of Sidon
sets and linear codes, via the simplicial complex~$X(\mathcal{C})$ given by
\cref{quotient}.

We also obtain restatements of open problems on planar difference sets. For
example, the conjecture that all planar difference sets are cyclic becomes:

\begin{restatable*}{conjecture}{cyclic}
  Let $\mathcal{C}$ be a colored $k$-configuration which is also a projective
  plane. Then $\mathcal{C}$ admits a free cyclic group action.
\end{restatable*}

\section{Colored $k$-configurations}

Following Gr{\"u}nbaum \cite{grunbaum2009configurations}, we define a
$k$-configuration as a certain kind of $k$-regular incidence structure (where $k
\in \mathbb{N}$):

\begin{definition}
  A \emph{$k$-configuration} consists of finite sets $P, L$ (whose elements are
  called ``points'' and ``lines,'' respectively) and an incidence relation $R
  \subseteq P \times L$, satisfying the following conditions:

  \begin{enumerate}[(1)]
    \item
    There do not exist distinct $p_{1}, p_{2} \in P$ and distinct $l_{1}, l_{2}
    \in L$ with $(p_{i}, l_{j}) \in R$ for all $i, j \in \{1, 2\}$. (That is, no
    two points are on more than one common line; equivalently, no two lines
    contain more than one common point.)

    \item
    Each point is incident with exactly $k$ lines.

    \item
    Each line is incident with exactly $k$ points.

  \end{enumerate}
  For a $k$-configuration $\mathcal{C}$, we will often write $P, L, R$ as
  $P(\mathcal{C}), L(\mathcal{C}), R(\mathcal{C})$.
\end{definition}

A simple counting argument shows the following:

\begin{remark}
  In a $k$-configuration, the numbers of points and lines are equal.
\end{remark}

A $k$-configuration $\mathcal{C}$ corresponds to a $k$-regular bipartite graph
$G(\mathcal{C})$:

\begin{definition}
  Given a $k$-configuration $\mathcal{C}$, we define a graph $G(\mathcal{C})$:

  \begin{itemize}
    \item
    The vertex set of $G(\mathcal{C})$ is the disjoint union $P(\mathcal{C})
    \sqcup L(\mathcal{C})$.

    \item
    The edge set of $G(\mathcal{C})$ consists of edges $\{p, l\}$ for each $(p,
    l) \in R(\mathcal{C})$.

  \end{itemize}
\end{definition}

In fact, a $k$-configuration $\mathcal{C}$ is equivalent to a $k$-regular
bipartite graph $G(\mathcal{C})$ with no 4-cycle. The ``no 4-cycle'' requirement
corresponds to condition (1), and the $k$-regularity corresponds to conditions
(2) and (3). We will often require $G(\mathcal{C})$ to be connected:

\begin{definition}
  A $k$-configuration $\mathcal{C}$ is \emph{connected} if $G(\mathcal{C})$ is
  connected.
\end{definition}

To show that $\mathcal{C}$ is connected, it suffices to consider just the points
of $\mathcal{C}$:

\begin{lemma}
\label{connected}
  Let $\mathcal{C}$ be a $k$-configuration with $k > 0$, and suppose there
  exists a path in $G(\mathcal{C})$ between any two points of $\mathcal{C}$.
  Then $\mathcal{C}$ is connected.
\end{lemma}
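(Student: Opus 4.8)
The plan is to reduce connectivity of the whole bipartite graph $G(\mathcal{C})$ to the hypothesis, which only concerns pairs of points, by using each line's incidences to ``fall back'' to a point. Recall $G(\mathcal{C})$ is bipartite with parts $P = P(\mathcal{C})$ and $L = L(\mathcal{C})$, and that since $k > 0$, condition (3) guarantees every line $l \in L$ is incident with at least one point; fix for each $l$ some point $\phi(l) \in P$ with $\{\phi(l), l\} \in E(G(\mathcal{C}))$.

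The argument then splits into three cases depending on the types of the two vertices $u, v$ we wish to join by a path. If $u, v \in P$, a path exists by hypothesis. If $u \in P$ and $v \in L$, concatenate a path from $u$ to $\phi(v)$ (which exists by hypothesis) with the edge $\{\phi(v), v\}$. If $u, v \in L$, concatenate the edge $\{u, \phi(u)\}$, a path from $\phi(u)$ to $\phi(v)$, and the edge $\{\phi(v), v\}$. In each case we obtain a walk, hence a path, between $u$ and $v$, so $G(\mathcal{C})$ is connected and therefore $\mathcal{C}$ is connected by definition.

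There is no real obstacle here; the only subtlety is the use of the hypothesis $k > 0$, which is exactly what ensures $\phi$ is well-defined (a $0$-configuration with at least one line and no points would be a counterexample). The write-up is a few lines and requires no calculation.
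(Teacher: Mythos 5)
Your proof is correct and takes the same approach as the paper, which simply notes that since $k > 0$ each line is incident with at least one point and leaves the resulting case analysis implicit. Your write-up just makes that routine reduction explicit.
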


\begin{proof}
  Since $k > 0$, each line in $\mathcal{C}$ is incident with at least one point.
\end{proof}

We now consider $k$-coloring the incidences of a $k$-configuration
$\mathcal{C}$; that is, we consider functions $\chi \colon R(\mathcal{C}) \to
[k]$, such that if two incidences $(p_{1}, l_{1}), (p_{2}, l_{2})$ have $p_{1} =
p_{2}$ or $l_{1} = l_{2}$, then $\chi(p_{1}, l_{1}) \ne \chi(p_{2}, l_{2})$.
This corresponds exactly to a \emph{$k$-edge coloring} of $G(\mathcal{C})$:

\begin{definition}
  For a graph $G$, a \emph{$k$-edge coloring} of $G$ is a function $\chi \colon
  E(G) \to [k]$ that assigns distinct colors to distinct edges sharing a vertex.
\end{definition}
  
Note that if $G$ is a $k$-regular, bipartite graph, then $G$ always has a
$k$-edge coloring, since line graphs of bipartite graphs are
perfect~\cite{konig1916graphen}. For our construction, we will need the $k$-edge
coloring $\chi$ to satisfy an additional property:

\begin{definition}
\label{6-cycle}
  Let $G$ be a $k$-regular bipartite graph, and let $\chi$ be a $k$-edge
  coloring of $G$. For each vertex $v \in V(G)$ and each color $c \in [k]$,
  let $\phi_{c}(v)$ be the unique vertex $w \in V(G)$ such that $vw$ is an
  edge of $G$ with color $\chi(vw) = c$.

  \medskip

  We say that $\chi$ has the \emph{6-cycle property}, if, for each vertex $v \in
  V(G)$, and for each triple $(a, b, c)$ of distinct colors $a, b, c \in [k]$,
  we have
  $$(\phi_{c} \circ \phi_{b} \circ \phi_{a} \circ \phi_{c} \circ \phi_{b} \circ
  \phi_{a})(v) = v.$$
  (This equation means that following the unique path from $v$ along edges with
  colors $a, b, c, a, b, c$, in order, produces a 6-cycle in $G$.)
\end{definition}

\begin{definition}
\label{colored}
  A \emph{colored $k$-configuration} is a connected $k$-configuration
  $\mathcal{C}$, along with a $k$-edge coloring $\chi(\mathcal{C})$ of
  $G(\mathcal{C})$ satisfying the 6-cycle property.
\end{definition}

Our construction (\cref{quotient}) takes a colored $k$-configuration as its
input; the construction involves assigning labels to the lattice~$A_{n}$, and
the 6-cycle property ensures that we assign labels consistently. In
\cref{construction-planar,construction-semifields}, we give several examples of
colored $k$-configurations. For example, $PG(2, \mathbb{F}_{q})$, which is a $(q
+ 1)$-configuration, can be colored (\cref{pg-colored}).

\subsection{Duality}

The duality on $k$-configurations extends to colored $k$-configurations:

\begin{definition}
  Let $\mathcal{C}$ be a colored $k$-configuration. Then we obtain a \emph{dual}
  colored $k$-configuration $\mathcal{C}^{*}$ as follows:
  \begin{itemize}
    \item
    The points of $\mathcal{C}^{*}$ are the lines of $\mathcal{C}$.

    \item
    The lines of $\mathcal{C}^{*}$ are the points of $\mathcal{C}$.

    \item
    A point $p$ and line $l$ are incident in $\mathcal{C}^{*}$ if $(l, p)$ is an
    incidence of $\mathcal{C}$.

    \item
    The color of the incidence $(p, l)$ in $\mathcal{C}^{*}$ is the color of
    $(l, p)$ in $\mathcal{C}$.

  \end{itemize}
\end{definition}

\subsection{Group actions}

Our next goal is to define group actions on colored $k$-configurations. As
usual, we begin by defining a notion of homomorphism:

\begin{definition}
  A \emph{homomorphism} $\psi \colon \mathcal{C} \to \mathcal{D}$ of colored
  $k$-configurations $\mathcal{C}, \mathcal{D}$ consists of functions $\psi_{P}
  \colon P(\mathcal{C}) \to P(\mathcal{D})$, $\psi_{L} \colon L(\mathcal{C}) \to
  L(\mathcal{D})$, $\psi_{\chi} \colon [k] \to [k]$, with the following
  properties:
  \begin{itemize}
    \item
    If $(p, l) \in R(\mathcal{C})$, then $(\psi_{P}(p), \psi_{L}(l)) \in
    R(\mathcal{D})$.

    \item
    If $(p, l) \in R(\mathcal{C})$, then $\chi(\mathcal{D})(\psi_{P}(p),
    \psi_{L}(l)) = \psi_{\chi}(\chi(\mathcal{C})(p, l))$.

  \end{itemize}
  In other words, $\psi_{P}, \psi_{L}$ preserve incidences and color classes of
  incidences.
\end{definition}

Then a group action $\rho$ by a group $G$ on a colored $k$-configuration
$\mathcal{C}$ is, as usual, a group homomorphism $\rho \colon G \to
\text{Aut}(\mathcal{C})$. Note that $\rho$ consists of group actions $\rho_{P}$
and $\rho_{L}$ by $G$ on $P(\mathcal{C}), L(\mathcal{C})$ preserving incidences
and color classes of incidences. We say that $\rho$ is free if $\rho_{P}$ is
free.

\begin{definition}
  Let $G$ be a group, and let $\mathcal{C}$ be a colored $k$-configuration. We
  say that a group action $\rho \colon G \to \text{Aut}(\mathcal{C})$ is
  \emph{free} if $\rho_{P} \colon G \to \text{Aut}(P(\mathcal{C}))$ is free.
\end{definition}

\noindent
Note that we do not require $\rho_{L}$ to be free.

\section{Finite projective planes}

In our definition of $k$-configuration, condition (1) says that any two points
lie on at most one common line, and vice versa. By replacing ``at most one''
with ``exactly one,'' we obtain a definition of a finite projective plane. We
recall the usual definition of a projective plane:

\begin{definition}[\cite{hall1943projective}]
\label{projective}
  A \emph{projective plane} consists of sets $P, L$ (whose elements are called
  ``points'' and ``lines,'' respectively), and an incidence relation $R
  \subseteq P \times L$, satisfying the following conditions:

  \begin{enumerate}[(1)]
    \item
    For distinct $p_{1}, p_{2} \in P$, there exists a unique $l \in L$ with
    $(p_{1}, l), (p_{2}, l) \in R$.
    
    \item
    For distinct $l_{1}, l_{2} \in L$, there exists a unique $p \in P$ with $(p,
    l_{1}), (p, l_{2}) \in R$.

    \item
    There exist distinct $p_{1}, p_{2}, p_{3}, p_{4} \in P$, such that for each
    $l \in L$, at most two of the four pairs $(p_{1}, l)$, $(p_{2}, l)$,
    $(p_{3}, l)$, $(p_{4}, l)$ are in $R$.

  \end{enumerate}

  We say that the projective plane is \emph{finite} if $P$ and $L$ are finite.
\end{definition}

In a finite projective plane, there exists an integer $q$, called the
\emph{order} of the finite projective plane, such that each point is incident
with exactly $q + 1$ lines, and each line is incident with exactly $q + 1$
points (see~\cite{veblen1907non}). It follows by a counting argument that the
plane has exactly $q^{2} + q + 1$ points and lines.

Note that any finite projective plane with order $q$ is a $(q +
1)$-configuration. Conversely, any $(q + 1)$-configuration with $q \ge 2$
satisfying conditions (1) and (2) of \cref{projective} is a finite projective plane; this follows
from Hall's characterization of degenerate planes \cite{hall1943projective}. We
also have the following characterization:

\begin{lemma}
\label{characterize-projective}
  A $(k + 1)$-configuration $\mathcal{C}$ with $n$ points and $n$ lines is a
  projective plane if and only if $n = k^{2} + k + 1$.
\end{lemma}

\begin{proof}
  Let $S$ be the set of triples $(p, q, \ell)$, where $p, q \in P(\mathcal{C})$
  are distinct points on $\ell \in L(\mathcal{C})$. Since any two points $p, q$
  lie on at most one common line, we have $|S| \le n(n - 1)$. Since any line
  $\ell$ contains exactly $k + 1$ points, we have $|S| = nk(k + 1)$. Therefore,
  $nk(k + 1) \le n(n - 1)$, so $n \ge k^{2} + k + 1$, with equality if and only
  if $\mathcal{C}$ is a projective plane.
\end{proof}

The standard example of a finite projective plane is $PG(2, \mathbb{F}_{q})$:

\begin{definition}
  Let $q$ be a prime power. Then $PG(2, \mathbb{F}_{q})$ is a finite projective
  plane of order $q$, defined with reference to the vector space
  $\mathbb{F}_{q}^{3}$ over $\mathbb{F}_{q}$:

  \begin{itemize}
    \item
    Let $P$ be the set of one-dimensional subspaces of $\mathbb{F}_{q}^{3}$.

    \item
    Let $L$ be the set of two-dimensional subspaces of $\mathbb{F}_{q}^{3}$.

    \item
    Let $R \subseteq P \times L$ be the set of pairs $(p, l)$ of subspaces $p,
    l$ with $p \subseteq l$.
  \end{itemize}

  (Conditions (1) and (2) follow from the identity $\dim(U + V) + \dim(U \cap V)
  = \dim U + \dim V$, and for condition (3) we may take the four one-dimensional
  subspaces spanned by $(1, 0, 0), (0, 1, 0), (0, 0, 1), (1, 1, 1)$.)
\end{definition}

It is an open problem whether there exists a finite projective plane whose
order $q$ is not a prime power. The strongest negative result is given by Bruck
\& Ryser~\cite{bruck1949nonexistence}: if the order $q$ of a finite projective
plane satisfies $q \equiv 1, 2\pmod{4}$, then $q$ is a sum of two squares.

\section{The lattice $A_{n}$ and the simplicial complex $W_{n}$}
\label{lattice}

We first introduce the lattice $A_{n}$, which is well known from the sphere
packing literature (see~\cite{conway1999sphere}):
\begin{definition}
  For $n \ge 0$, the lattice $A_{n}$ is defined by
  $$A_{n} = \left\{(x_{1}, \ldots , x_{n + 1}) \in \mathbb{Z}^{n + 1} : \sum_{i
  = 1}^{n + 1}x_{i} = 0\right\}.$$
\end{definition}

For example, $A_{2}$ is the hexagonal lattice, and $A_{3}$ is the face-centered
cubic lattice. Locally, $A_{n}$ has the structure of the expanded simplex
(see~\cite{coxeter1981derivation}). We may consider $A_{n}$ a metric space by
using a scaled $\ell_{1}$-metric, $d(\vec{x}, \vec{y}) = \|\vec{x} -
\vec{y}\|_{1} / 2$. (This metric $d$ can be viewed as a graph metric, where we
consider $\vec{x}, \vec{y}$ to be adjacent if $\|\vec{x} - \vec{y}\|_{1} = 2$.)

We now introduce a simplicial complex structure $W_{n}$ on $A_{n}$; the
simplicial complex $W_{n}$ is also known as the \emph{Rips complex} of diameter
1 of $A_{n}$~\cite{vietoris1927hoheren}.
\begin{definition}
  For $n \ge 0$, the simplicial complex $W_{n}$ is defined as follows:
  \begin{itemize}
    \item
    The vertices $V(W_{n})$ of $W_{n}$ are the points of the lattice $A_{n}$.

    \item
    The faces of $W_{n}$ are the sets $F$ with $d(\vec{x}, \vec{y}) = 1$ for all
    distinct $\vec{x}, \vec{y} \in F$.

  \end{itemize}
\end{definition}
Note that $W_{n}$ is a flag simplicial complex; that is, if $F$ is a set of
vertices of $W_{n}$, and all pairs of vertices in $F$ are edges of $W_{n}$, then
$F$ is a face of $W_{n}$. In this way, $W_{n}$ is determined by its 1-skeleton.
For $W_{2}$, the facets are exactly the triangles of the hexagonal lattice (see \cref{fig:w2}).

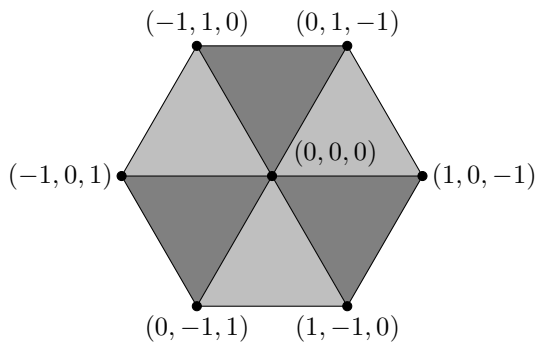
\begin{figure}
\begin{center}
\begin{tikzpicture}[x = 2cm, y = 2cm]
  \path[fill = gray] (0, 0) -- (0.5, 0.866) -- (-0.5, 0.866) -- (0, 0);
  \path[fill = gray] (0, 0) -- (-1, 0) -- (-0.5, -0.866) -- (0, 0);
  \path[fill = gray] (0, 0) -- (0.5, -0.866) -- (1, 0) -- (0, 0);

  \path[fill = gray!50] (0, 0) -- (1, 0) -- (0.5, 0.866) -- (0, 0);
  \path[fill = gray!50] (0, 0) -- (-0.5, 0.866) -- (-1, 0) -- (0, 0);
  \path[fill = gray!50] (0, 0) -- (-0.5, -0.866) -- (0.5, -0.866) -- (0, 0);

  \draw[fill] (-1, 0) circle [radius = 0.03];
  \draw[fill] (-0.5, -0.866) circle [radius = 0.03];
  \draw[fill] (-0.5, 0.866) circle [radius = 0.03];
  \draw[fill] (0, 0) circle [radius = 0.03];
  \draw[fill] (0.5, -0.866) circle [radius = 0.03];
  \draw[fill] (0.5, 0.866) circle [radius = 0.03];
  \draw[fill] (1, 0) circle [radius = 0.03];

  \draw (-1, 0) -- (0, 0) -- (1, 0);
  \draw (-0.5, -0.866) -- (0, 0) -- (0.5, 0.866);
  \draw (-0.5, 0.866) -- (0, 0) -- (0.5, -0.866);

  \draw (1, 0) -- (0.5, 0.866) -- (-0.5, 0.866) -- (-1, 0)
    -- (-0.5, -0.866) -- (0.5, -0.866) -- (1, 0);

  \node[above right] at (0.08, 0) {$(0, 0, 0)$};
  \node[right] at (1, 0) {$(1, 0, -1)$};
  \node[above] at (0.5, 0.866) {$(0, 1, -1)$};
  \node[above] at (-0.5, 0.866) {$(-1, 1, 0)$};
  \node[left] at (-1, 0) {$(-1, 0, 1)$};
  \node[below] at (-0.5, -0.866) {$(0, -1, 1)$};
  \node[below] at (0.5, -0.866) {$(1, -1, 0)$};
\end{tikzpicture}
\caption{The neighborhood of $(0, 0, 0)$ in $W_{2}$.}
\label{fig:w2}
\end{center}
\end{figure}
All six shaded triangles are facets of $W_{2}$; the light and dark shading
indicate ``positive'' and ``negative'' facets, respectively, as defined below.

To gain intuition for $A_{n}$ and $W_{n}$, consider the projection $p \colon
\mathbb{R}^{n + 1} \to \mathbb{R}^{n}$ onto the first $n$ coordinates, that is,
$$p(x_{1}, \ldots , x_{n + 1}) = (x_{1}, \ldots , x_{n}).$$
Then $p$ carries $A_{n}$ isomorphically (as an additive group) to
$\mathbb{Z}^{n}$, and embeds $W_{n}$ into $\mathbb{R}^{n}$. For example, in \cref{fig:projections} we draw
the facets of $p(W_{2})$ inside the unit square of $\mathbb{R}^{2}$, and the
facets of $p(W_{3})$ inside the unit cube of $\mathbb{R}^{3}$:

\begin{figure}
\begin{center}
\begin{tikzpicture}[x = 2cm, y = 2cm]
  \begin{scope}[yshift = 0.25cm]
    \path[fill = gray!50] (0, 0) -- (0, 1) -- (1, 0) -- (0, 0);
    \path[fill = gray] (0, 1) -- (1, 0) -- (1, 1) -- (0, 1);

    \draw[fill] (0, 0) circle [radius = 0.03];
    \draw[fill] (0, 1) circle [radius = 0.03];
    \draw[fill] (1, 0) circle [radius = 0.03];
    \draw[fill] (1, 1) circle [radius = 0.03];

    \draw (0, 0) -- (0, 1) -- (1, 1) -- (1, 0) -- (0, 0);
    \draw (0, 1) -- (1, 0);
  \end{scope}

  \begin{scope}[xshift = 5cm]
    \path[fill = gray!50] (0, 0) -- (0, 1) -- (1, 0) -- (0, 0);
    \path[fill = gray] (0.3, 1.5) -- (1.3, 0.5) -- (1.3, 1.5) -- (0.3, 1.5);

    \draw[fill] (0, 0) circle [radius = 0.03];
    \draw[fill] (0, 1) circle [radius = 0.03];
    \draw[fill] (1, 0) circle [radius = 0.03];
    \draw[fill] (1, 1) circle [radius = 0.03];

    \draw[fill] (0.3, 0.5) circle [radius = 0.03];
    \draw[fill] (0.3, 1.5) circle [radius = 0.03];
    \draw[fill] (1.3, 0.5) circle [radius = 0.03];
    \draw[fill] (1.3, 1.5) circle [radius = 0.03];

    \draw (0, 0) -- (0, 1) -- (1, 1) -- (1, 0) -- (0, 0);
    \draw (0, 1) -- (1, 0);

    \draw (0.3, 0.5) -- (0.3, 1.5) -- (1.3, 1.5) -- (1.3, 0.5) -- (0.3, 0.5);
    \draw (0.3, 1.5) -- (1.3, 0.5);

    \draw (0, 0) -- (0.3, 0.5);
    \draw (0, 1) -- (0.3, 1.5);
    \draw (1, 0) -- (1.3, 0.5);
    \draw (1, 1) -- (1.3, 1.5);

    \draw (0.3, 0.5) -- (0, 1);
    \draw (1.3, 0.5) -- (1, 1);
    \draw (0.3, 0.5) -- (1, 0);
    \draw (0.3, 1.5) -- (1, 1);
  \end{scope}
\end{tikzpicture}
\caption{The projections $p(W_{2})$ and $p(W_{3})$.}
\label{fig:projections}
\end{center}
\end{figure}
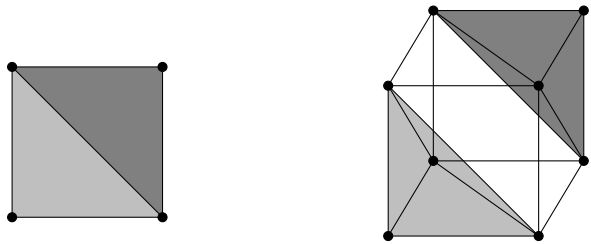
We see that $W_{2}$ is a tiling of the plane $\{(x, y, z) \in \mathbb{R}^{3} : x
+ y + z = 0\}$, but for $n > 2$, $W_{n}$ does not fill the hyperplane it spans.
(This is why our construction in \cref{quotient} will not generally give a
trianguation of $T^{n}$.)

\begin{lemma}
\label{facets}
  The simplicial complex $W_{n}$ has the following properties:

  \begin{itemize}
    \item
    $W_{n}$ has dimension $n$.

    \item
    The facets of $W_{n}$ are of the form
    $$\{\vec{x} + \vec{e}_{i}\}_{i \in [n + 1]}\qquad\text{or}\qquad
      \{\vec{x} - \vec{e}_{j}\}_{j \in [n + 1]}$$
    for fixed $\vec{x} \in \mathbb{Z}^{n + 1}$ with sum of coordinates $-1$ (in
    the first case) or $1$ (in the second). We call facets of the first type
    ``positive,'' and facets of the second type ``negative.'' We say the facet
    is ``rooted at $\vec{x}$'' in either case. (Here the $\vec{e}_{i}$ are the
    standard basis vectors in $\mathbb{R}^{n + 1}$.)

    \item
    Each vertex of $W_{n}$ is incident with exactly $n + 1$ facets of each type.

    \item
    Each face $F$ of $W_{n}$ is contained in a facet of $W_{n}$. If $\dim F \ge
    2$, then this facet is unique.
  \end{itemize}
\end{lemma}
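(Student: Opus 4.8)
The plan is to reduce the lemma to an explicit description of the $1$-skeleton of $W_{n}$ followed by a short case analysis. First I would record the basic adjacency criterion: for $\vec{x}, \vec{y} \in A_{n}$ one has $d(\vec{x}, \vec{y}) = 1$ if and only if $\vec{y} - \vec{x} = \vec{e}_{i} - \vec{e}_{j}$ for some distinct $i, j \in [n+1]$ --- because a vector of $\mathbb{Z}^{n+1}$ with coordinate sum $0$ and $\ell_{1}$-norm $2$ has exactly one coordinate equal to $+1$ and exactly one equal to $-1$. Since translation by a lattice vector of $A_{n}$ is an isometry, hence a simplicial automorphism of $W_{n}$, it suffices to analyze faces $F$ containing $\vec{0}$; every other vertex of such an $F$ then has the form $\vec{e}_{i} - \vec{e}_{j}$, and I will call $i$ its \emph{head} and $j$ its \emph{tail}.

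The technical heart is the claim that two distinct vertices $\vec{e}_{a} - \vec{e}_{b}$ and $\vec{e}_{c} - \vec{e}_{d}$ are adjacent in $W_{n}$ if and only if they share their head ($a = c$) or share their tail ($b = d$), and never both. I would prove this by evaluating the $\ell_{1}$-norm of $(\vec{e}_{a} - \vec{e}_{b}) - (\vec{e}_{c} - \vec{e}_{d}) = \vec{e}_{a} + \vec{e}_{d} - \vec{e}_{b} - \vec{e}_{c}$ across the few coincidence patterns among $a, b, c, d$: it equals $2$ precisely when $a = c$ (with $b \ne d$) or $b = d$ (with $a \ne c$), and equals $4$ otherwise; in particular $a = d$ or $b = c$ creates a coordinate of absolute value $2$ and forces norm $4$. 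This bookkeeping is the step I expect to be most error-prone, since one must be sure no coincidence pattern is overlooked.

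From this I would extract a dichotomy: if $F \ni \vec{0}$ has at least two further vertices, then either all of $F \setminus \{\vec{0}\}$ share one common head $h$, or all share one common tail $t$. The ``mixed'' possibility --- a triple of vertices, two sharing a head and a third sharing a tail with one of them --- is excluded by the adjacency claim applied to the remaining pair, and a one-line induction then propagates the conclusion to all of $F \setminus \{\vec{0}\}$. In the first case $F \subseteq \{\vec{e}_{h} - \vec{e}_{j} : j \in [n+1]\}$, the negative facet rooted at $\vec{e}_{h}$; in the second $F \subseteq \{\vec{e}_{i} - \vec{e}_{t} : i \in [n+1]\}$, the positive facet rooted at $-\vec{e}_{t}$. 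Conversely each of these $(n+1)$-vertex sets is a clique, hence (as $W_{n}$ is flag) a face, and it is maximal: adjoining a vertex $\vec{w}$ to, say, the positive facet rooted at $\vec{x}$ would force $\vec{u} := \vec{w} - \vec{x}$ (which has coordinate sum $1$) to satisfy $\|\vec{u} - \vec{e}_{i}\|_{1} \le 2$ for every $i \in [n+1]$, and a short check shows the only such $\vec{u}$ is $\vec{e}_{a}$, i.e. $\vec{w}$ was already a vertex of the facet. The degenerate cases $n = 0$ (vertices are facets) and $n = 1$ (edges are facets) fit the same description, since $\{\vec{0}\}$ and $\{\vec{0}, \vec{e}_{a} - \vec{e}_{b}\}$ are of the stated forms. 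Together these statements give the first two bullets and, as facets are $n$-simplices while every face lies in one, the dimension count.

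For the third bullet, fix a vertex $\vec{v}$: a positive facet contains $\vec{v}$ exactly when it is rooted at $\vec{v} - \vec{e}_{i}$ for some $i \in [n+1]$, these $n+1$ roots are distinct, and the root is recovered from the vertex set via $\frac{1}{n+1}\bigl(\sum_{\vec{w} \in F}\vec{w} - \vec{1}\bigr)$ (with $\vec{1}$ the all-ones vector), so distinct roots give distinct facets; hence $\vec{v}$ lies in exactly $n+1$ positive facets, and symmetrically in $n+1$ negative ones. For the last bullet, if $\dim F \ge 2$ then $|F| \ge 3$, so after translating $\vec{0}$ into $F$ the set $F \setminus \{\vec{0}\}$ has at least two elements and therefore satisfies exactly one alternative of the dichotomy (it cannot satisfy both, since two distinct vectors sharing a tail have distinct heads). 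That alternative pins down the unique positive (resp. negative) facet containing $F$, and the same observation shows $F$ is contained in no facet of the opposite type, so the facet containing $F$ is unique.
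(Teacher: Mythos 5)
Your proposal is correct and follows essentially the same route as the paper's proof: translate so that $\vec{0} \in F$, observe that the remaining vertices have the form $\vec{e}_{i} - \vec{e}_{j}$, and use the pairwise-adjacency condition (shared head or shared tail) to force $F$ into a positive or negative facet. You simply spell out the adjacency computation, the maximality check, and the counting that the paper compresses into ``The rest follows.''
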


\begin{proof}
  Let $F$ be a face of $W_{n}$, and assume $\vec{0} \in F$. Then each other
  vertex of $F$ is of the form $\vec{e}_{i} - \vec{e}_{j}$ for distinct $i, j
  \in [n + 1]$. Denote such a vertex by the ordered pair $(i, j)$. If $F$
  contains two nonzero vertices $(i_{1}, j_{1}), (i_{2}, j_{2})$, then $i_{1} =
  i_{2}$ or $j_{1} = j_{2}$. It follows that the nonzero vertices of $F$ are of
  the form
  $$\{(i_{1}, j), \ldots , (i_{k}, j)\} \qquad\text{or}\qquad
    \{(i, j_{1}), \ldots , (i, j_{k})\}.$$
  In the first case, we get a positive facet rooted at $-\vec{e}_{j}$; in the
  second case, we get a negative facet rooted at $\vec{e}_{i}$. The rest
  follows.
\end{proof}

\begin{lemma}
\label{action}
  The group action of $A_{n}$ on itself by addition induces a group action of
  $A_{n}$ on the simplicial complex $W_{n}$.
\end{lemma}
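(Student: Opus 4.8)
The plan is to exhibit, for each $\vec{v} \in A_{n}$, a simplicial automorphism $\rho(\vec{v})$ of $W_{n}$ given by translation, and then to check that $\vec{v} \mapsto \rho(\vec{v})$ is a group homomorphism $A_{n} \to \text{Aut}(W_{n})$. First I would define $\rho(\vec{v}) \colon V(W_{n}) \to V(W_{n})$ by $\vec{x} \mapsto \vec{x} + \vec{v}$. This is well-defined on vertices because the sum of the coordinates of $\vec{x} + \vec{v}$ is $0 + 0 = 0$, so $\vec{x} + \vec{v} \in A_{n} = V(W_{n})$; and it is a bijection, with inverse given by translation by $-\vec{v}$.

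Next I would check that $\rho(\vec{v})$ sends faces to faces. Recall that a set $F$ of vertices is a face of $W_{n}$ exactly when $d(\vec{x}, \vec{y}) = 1$ for all distinct $\vec{x}, \vec{y} \in F$; since $W_{n}$ is a flag complex this condition is already self-contained, so there is no need to argue separately about higher-dimensional faces. Given a face $F$ and distinct $\vec{x}, \vec{y} \in F$, we have $d(\vec{x} + \vec{v}, \vec{y} + \vec{v}) = \|(\vec{x} + \vec{v}) - (\vec{y} + \vec{v})\|_{1} / 2 = \|\vec{x} - \vec{y}\|_{1} / 2 = d(\vec{x}, \vec{y}) = 1$, so $F + \vec{v}$ is again a face of $W_{n}$. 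Applying the same reasoning to $\rho(-\vec{v})$ shows that $F$ is a face if and only if $F + \vec{v}$ is, so $\rho(\vec{v})$ is a simplicial automorphism of $W_{n}$, i.e. $\rho(\vec{v}) \in \text{Aut}(W_{n})$.

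Finally I would verify the homomorphism property. We have $\rho(\vec{0}) = \mathrm{id}$, and $\rho(\vec{v}) \circ \rho(\vec{w})$ sends $\vec{x}$ to $\vec{x} + \vec{w} + \vec{v} = \vec{x} + (\vec{v} + \vec{w})$, so $\rho(\vec{v}) \circ \rho(\vec{w}) = \rho(\vec{v} + \vec{w})$ (the order of the sum being irrelevant since $A_{n}$ is abelian). Hence $\rho \colon A_{n} \to \text{Aut}(W_{n})$ is a group homomorphism, which is precisely a group action of $A_{n}$ on $W_{n}$, and by construction it is induced by the action of $A_{n}$ on its own vertex set by addition. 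The statement is essentially bookkeeping; the only point worth flagging is that translations descend to automorphisms of $W_{n}$ precisely because the faces of $W_{n}$ are cut out by the translation-invariant condition $d = 1$ on pairs of vertices, so there is no genuine obstacle here.
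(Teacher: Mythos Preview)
Your proof is correct and is precisely an unpacking of the paper's one-line argument, which simply reads ``By the translational symmetry of the definition of $W_{n}$.'' You have spelled out exactly what that translational symmetry means: the face condition $d(\vec{x},\vec{y})=1$ is translation-invariant, so translations are simplicial automorphisms, and the homomorphism property is immediate.
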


\begin{proof}
  By the translational symmetry of the definition of $W_{n}$.
\end{proof}

Recall that a simplicial complex $X$ is \emph{simply connected} if its geometric
realization is connected and has trivial fundamental group. We saw above that
the facets of $W_{3}$ fail to fill 3-dimensional space, but the projection
$p(W_{3})$ does contain the 2-skeleton of (a CW complex homeomorphic to)
$\mathbb{R}^{3}$. Hence $W_{3}$ is simply connected. This generalizes to $W_{n}$
for arbitrary $n$:

\begin{lemma}
\label{simply-connected}
  The simplicial complex $W_{n}$ is simply connected.
\end{lemma}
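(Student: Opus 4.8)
The plan is to show $W_n$ is simply connected by exhibiting it as a subcomplex of a larger, visibly simply connected complex (or by a direct deformation argument), using the projection $p$ and the fact that $W_n$ contains the full $2$-skeleton of a triangulation of $\mathbb{R}^n$. The cleanest route: prove that the inclusion of $W_n$ into the Rips complex $W_n^\infty$ of diameter $\infty$ of $A_n$ — equivalently, into a simplicial complex whose geometric realization is homeomorphic to $\mathbb{R}^n$ — induces an isomorphism on $\pi_1$. Since a complex homeomorphic to $\mathbb{R}^n$ is simply connected, this gives the result.

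First I would make precise the "standard triangulation" $Y_n$ of $\mathbb{R}^n$ refining the unit-cube lattice, pulled back through $p$: concretely, take all simplices of the form $\mathrm{conv}\{\vec 0, \vec e_{i_1}, \vec e_{i_1}+\vec e_{i_2}, \ldots\}$ and their $A_n$-translates (the "Freudenthal" or "Kuhn" triangulation). One checks from \cref{facets} that every edge of $Y_n$ is an edge of $W_n$ — the $1$-skeleta agree — and since $W_n$ is flag, every face of $Y_n$ of dimension $\le n$ that is a clique in the $1$-skeleton is a face of $W_n$; in particular the entire $2$-skeleton of $Y_n$ is contained in $W_n$, and conversely $W_n \subseteq Y_n$ as $W_n$ has dimension $n$ (\cref{facets}) and all its simplices are simplices of $Y_n$. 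So we have $\mathrm{sk}_2(Y_n) \subseteq W_n \subseteq Y_n$.

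Next I would invoke the standard cellular fact that for any CW (or simplicial) complex $Z$, the inclusion $\mathrm{sk}_2(Z) \hookrightarrow Z$ induces an isomorphism on $\pi_1$ (attaching cells of dimension $\ge 3$ does not change the fundamental group). Applying this to $Z = Y_n$, and using $|Y_n| \cong \mathbb{R}^n$ so that $\pi_1(Y_n) = 1$, we get $\pi_1(\mathrm{sk}_2(Y_n)) = 1$. Then, since $\mathrm{sk}_2(Y_n) \subseteq W_n \subseteq Y_n$ and $\mathrm{sk}_2(W_n) = \mathrm{sk}_2(Y_n)$ (the $1$- and $2$-skeleta all coincide), the same fact applied to $Z = W_n$ gives $\pi_1(W_n) \cong \pi_1(\mathrm{sk}_2(W_n)) = \pi_1(\mathrm{sk}_2(Y_n)) = 1$. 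Connectedness of $W_n$ is immediate since its $1$-skeleton is the connected Cayley-type graph on $A_n$ with generators $\vec e_i - \vec e_j$.

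The main obstacle is the careful verification that the $1$- and $2$-skeleta of $W_n$ and of the Freudenthal triangulation $Y_n$ genuinely coincide — that every triangle which is a clique in the shared $1$-skeleton is actually a $2$-simplex of $Y_n$, so that no $2$-cell of $W_n$ lies "outside" $\mathbb{R}^n$. This is exactly the content of the last bullet of \cref{facets} (each face is contained in a facet, uniquely if $\dim \ge 2$) combined with the explicit description of facets there; I would spell out that a triangle $\{\vec 0, \vec e_{i_1}-\vec e_{j_1}, \vec e_{i_2}-\vec e_{j_2}\}$ forming a clique forces, as in the proof of \cref{facets}, either $i_1 = i_2$ or $j_1 = j_2$, and in each case the triangle is a face of the corresponding positive or negative facet, which is a simplex of $Y_n$. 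Everything else is a citation of standard algebraic topology.
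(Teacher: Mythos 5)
Your overall strategy---sandwich $W_n$ between the $2$-skeleton of a simply connected complex structure on $\mathbb{R}^n$ and use that $\pi_1$ depends only on the $2$-skeleton---is the same idea as the paper's proof, but the specific sandwich you assert is false, and this is a genuine gap. The Freudenthal triangulation $Y_n$ you define has edges from $\vec{0}$ to $\vec{e}_{i_1}+\vec{e}_{i_2}$ (and longer partial sums); the corresponding points of $A_n$ are at distance $\ge 2$ in the scaled $\ell_1$-metric, so these are \emph{not} edges of $W_n$. Hence ``every edge of $Y_n$ is an edge of $W_n$'' fails, and $\mathrm{sk}_2(Y_n)\not\subseteq W_n$. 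The reverse inclusion fails too: the positive facet of $W_n$ projects to $\{\vec{0},\vec{e}_1,\dots,\vec{e}_n\}$, whose vertex set is an antichain rather than a chain $\vec{0},\vec{e}_{i_1},\vec{e}_{i_1}+\vec{e}_{i_2},\dots$, so it is not a simplex of $Y_n$. Already for $n=2$ the paper's figure shows $p(W_2)$ cutting each unit square along the diagonal from $(0,1)$ to $(1,0)$, while your $Y_2$ cuts along the diagonal from $(0,0)$ to $(1,1)$; the two complexes share only the cube edges. So both inclusions in ``$\mathrm{sk}_2(Y_n)\subseteq W_n\subseteq Y_n$'' are wrong, and the subsequent identification of $2$-skeleta collapses.

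The failure is not just a matter of picking the other diagonal convention: for $n\ge 3$ no triangulation of $\mathbb{R}^n$ has its $2$-skeleton inside $W_n$, because filling the region of each unit cube left uncovered by the two ``corner'' facets of $W_n$ forces new edges (this is exactly the paper's observation that $W_n$ does not fill its hyperplane for $n>2$). The repair is the paper's argument: take the \emph{cubical} CW structure $V_n$ on $\mathbb{R}^n$, note that each $2$-cell (unit square) of $V_n$ is the union of two triangles of $p(W_n)$ glued along the edge $\vec{e}_j-\vec{e}_i$, so that $\mathrm{sk}_2(W_n)$ contains a subdivision of $\mathrm{sk}_2(V_n)$ having the same $1$-skeleton as $W_n$, and every remaining triangle of $W_n$ (such as $\{\vec{e}_1,\vec{e}_2,\vec{e}_3\}$, which lies in no $2$-face of $V_n$) is attached along edges already present. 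Attaching $2$-cells only quotients $\pi_1$, so simple connectivity of $\mathrm{sk}_2(V_n)\simeq\mathbb{R}^n$ passes to $\mathrm{sk}_2(W_n)$ and hence to $W_n$. Your citations of the standard facts ($\pi_1$ determined by the $2$-skeleton, connectedness of the $1$-skeleton) are fine; it is only the comparison complex that must be changed.
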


\begin{proof}
  The space $\mathbb{R}^{n}$ has a CW complex structure~$V_{n}$; for each
  $\vec{x} \in \mathbb{Z}^{n}$, $I \subseteq [n]$, the CW complex~$V_{n}$ has a
  $|I|$-cell given by
  $$f_{\vec{x}, I} = \left\{\vec{x} + \sum_{i \in I}\lambda_{i}\vec{e}_{i}
    : \lambda_{i} \in [0, 1]\right\}.$$
  
  We claim each 2-cell $f_{\vec{x}, \{i, j\}}$ is present in $p(W_{n})$ as the
  union of two triangles. To see this, let $\vec{v} \in A_{n}$ be the unique
  vector with $p(\vec{v}) = \vec{x}$. Then $f_{\vec{x}, \{i, j\}}$ lies in the
  image $p(W_{n})$ as shown in \cref{fig:2-cell}.

  \begin{figure}
  \begin{center}
  \begin{tikzpicture}[x = 2cm, y = 2cm]
    \begin{scope}
      \path[fill = gray!50] (0, 0) -- (0, 1) -- (1, 0) -- (0, 0);
      \path[fill = gray] (0, 1) -- (1, 0) -- (1, 1) -- (0, 1);

      \draw[fill] (0, 0) circle [radius = 0.03];
      \draw[fill] (0, 1) circle [radius = 0.03];
      \draw[fill] (1, 0) circle [radius = 0.03];
      \draw[fill] (1, 1) circle [radius = 0.03];

      \draw (0, 0) -- (0, 1) -- (1, 1) -- (1, 0) -- (0, 0);
      \draw (0, 1) -- (1, 0);

      \node[below left] at (0, 0)
        {$\vec{v}$};
      \node[above left] at (0, 1)
        {$\vec{v} + \vec{e}_{j} - \vec{e}_{n + 1}$};
      \node[below] at (1, 0)
        {$\vec{v} + \vec{e}_{i} - \vec{e}_{n + 1}$};
      \node[above] at (1, 1)
        {$\vec{v} + \vec{e}_{i} + \vec{e}_{j} - 2\vec{e}_{n + 1}$};
    \end{scope}

    \draw [->] (1.6, 0.5) -- (1.9, 0.5);

    \begin{scope}[xshift = 5cm]
      \path[fill = gray!50] (0, 0) -- (0, 1) -- (1, 0) -- (0, 0);
      \path[fill = gray] (0, 1) -- (1, 0) -- (1, 1) -- (0, 1);

      \draw[fill] (0, 0) circle [radius = 0.03];
      \draw[fill] (0, 1) circle [radius = 0.03];
      \draw[fill] (1, 0) circle [radius = 0.03];
      \draw[fill] (1, 1) circle [radius = 0.03];

      \draw (0, 0) -- (0, 1) -- (1, 1) -- (1, 0) -- (0, 0);
      \draw (0, 1) -- (1, 0);

      \node[below] at (0, 0)
        {$\vec{x}$};
      \node[above] at (0, 1)
        {$\vec{x} + \vec{e}_{j}$};
      \node[below right] at (1, 0)
        {$\vec{x} + \vec{e}_{i}$};
      \node[above right] at (1, 1)
        {$\vec{x} + \vec{e}_{i} + \vec{e}_{j}$};
    \end{scope}
  \end{tikzpicture}
	\caption{Each 2-cell is present in $p(W_{n})$.}
	\label{fig:2-cell}
  \end{center}
	\end{figure}
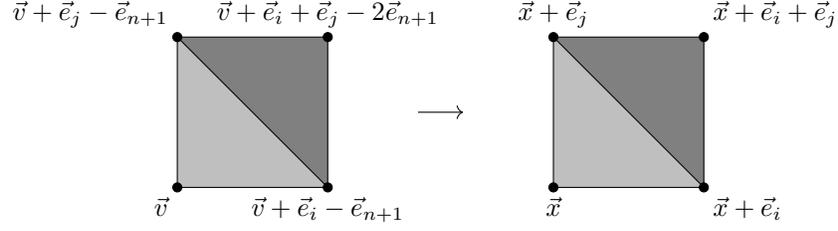
  Moreover, all 1-dimensional faces of $W_{n}$ appear in \cref{fig:2-cell} for some
  $\vec{x}, i, j$, so the 2-skeleton $\text{sk}_{2}(W_{n})$ can be obtained from
  $\text{sk}_{2}(V_{n})$ by attaching 2-faces. Since the fundamental group of a
  complex depends only on its 2-skeleton, we have $\pi_{1}(\text{sk}_{2}(V_{n}))
  = \pi_{1}(\mathbb{R}^{n}) = 0$, so $\pi_{1}(\text{sk}_{2}(W_{n})) = 0$, so
  $\pi_{1}(W_{n}) = 0$.
\end{proof}

For simplicial complexes, we may also define homotopy combinatorially:

\begin{definition}
\label{path}
  Let $X$ be a simplicial complex. A \emph{path} $\gamma$ in $X$ is a finite
  sequence of vertices $v_{0}, \ldots , v_{m}$, for some $m \ge 0$, with $v_{i},
  v_{i + 1}$ adjacent for each $i \in \{0, \ldots , m - 1\}$. We say that
  $\gamma$ \emph{starts at $v_{0}$} and \emph{ends at $v_{m}$}, and we may refer
  to $\gamma$ as a path \emph{from $v_{0}$} \emph{to $v_{m}$}.
\end{definition}

\begin{definition}
  Let $X$ be a simplicial complex, let $u, v, w \in V(X)$, let $\gamma$ be a
  path $v_{0}, \ldots , v_{m}$ in $X$ from $u$ to $v$, and let $\delta$ be a
  path $w_{0}, \ldots , w_{n}$ in $X$ from $v$ to $w$. Then we define $\gamma
  \cdot \delta$ as the path $v_{0}, \ldots , v_{m}, w_{1}, \ldots , w_{n}$ in
  $X$ from $u$ to $w$.
\end{definition}

\begin{definition}
  Let $X$ be a simplicial complex, and let $\gamma, \gamma'$ be paths in $X$. We
  define a relation $\gamma \simeq \gamma'$ (in words, ``$\gamma, \gamma'$ are
  homotopic'') inductively:
  \begin{enumerate}[(1)]
    \item 
    If $u, v \in V(X)$ are adjacent, then $u, v, u \simeq u$.

    \item
    If $u, v, w\in V(X)$ are contained in a common face $\{u, v, w\}$ of $X$,
    then $u, v, w \simeq u, w$.

    \item
    If $\gamma \simeq \gamma'$ and $\delta \simeq \delta'$, then $\gamma \cdot
    \delta \simeq \gamma' \cdot \delta'$.

    \item
    For any path $\gamma$, we have $\gamma \simeq \gamma$.

    \item
    If $\gamma \simeq \gamma'$, then $\gamma' \simeq \gamma$.

    \item
    If $\gamma \simeq \gamma'$ and $\gamma' \simeq \gamma''$, then $\gamma
    \simeq \gamma''$.

  \end{enumerate}

  (An induction on the definition shows that if $\gamma \simeq \gamma'$, then
  $\gamma, \gamma'$ start at the same vertex and end at the same vertex.)
\end{definition}

The usual topological notion of a simplicial complex being simply connected is
equivalent to a combinatorial notion using the definitions above (see
\cite{reynaud2003algebraic}). As a result, we get the following corollary to
\cref{simply-connected}:

\begin{corollary}
\label{simply-connected-cor}
  Let $\vec{u}, \vec{v} \in V(W_{n})$, and let $\gamma, \gamma'$ be paths in
  $W_{n}$ from $\vec{u}$ to $\vec{v}$. Then we have $\gamma \simeq \gamma'$.
\end{corollary}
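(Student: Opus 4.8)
The plan is to deduce this immediately from \cref{simply-connected}, via the standard dictionary between the topological fundamental group $\pi_{1}(|W_{n}|)$ and the combinatorial \emph{edge-path group} of $W_{n}$ (see~\cite{reynaud2003algebraic}). In other words, the content of the corollary is entirely ``reading off'' simple connectivity in combinatorial language; no new geometric input is needed beyond \cref{simply-connected}.

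The first step is to check that the relation $\simeq$ defined above is exactly the congruence on paths generated by the two elementary moves of the edge-path groupoid: backtrack cancellation, which is item~(1), and the $2$-face move, which is item~(2). Items~(3)--(6) merely record that $\simeq$ is an equivalence relation compatible with concatenation. Hence, by the cited equivalence of the combinatorial and topological notions, for each base vertex $\vec{w} \in V(W_{n})$ the set of $\simeq$-classes of closed paths at $\vec{w}$, made a group under concatenation, is isomorphic to $\pi_{1}(|W_{n}|, \vec{w})$. By \cref{simply-connected}, $W_{n}$ is connected with $\pi_{1}(|W_{n}|) = 0$, so this edge-path group is trivial for every $\vec{w}$.

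Finally I would assemble the pieces with the usual reversal trick. Write $\overline{\delta}$ for the reversal $w_{m}, \ldots, w_{0}$ of a path $\delta = w_{0}, \ldots, w_{m}$. A routine induction from item~(1), together with (2)--(6), gives $\delta \cdot \overline{\delta} \simeq w_{0}$ (the length-$0$ path at the start of $\delta$), and by definition a length-$0$ endpoint path appended to $\gamma$ returns $\gamma$ verbatim. Now, given paths $\gamma, \gamma'$ from $\vec{u}$ to $\vec{v}$, the concatenation $\gamma \cdot \overline{\gamma'}$ is a closed path at $\vec{u}$, so $\gamma \cdot \overline{\gamma'} \simeq \vec{u}$ by triviality of the edge-path group at $\vec{u}$. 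Concatenating with $\gamma'$ on the right and using associativity of concatenation, items~(3)--(6), and $\overline{\gamma'} \cdot \gamma' \simeq \vec{v}$, we obtain $\gamma \simeq \gamma \cdot \overline{\gamma'} \cdot \gamma' \simeq \gamma'$. I expect no serious obstacle here: the only care required is the bookkeeping of verifying that the paper's relation $\simeq$ coincides with the edge-path-groupoid relation and that reversal and concatenation behave formally as claimed — the real work having already been done in \cref{simply-connected}.
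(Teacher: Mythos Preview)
Your proposal is correct and takes essentially the same approach as the paper: the paper simply notes that the topological notion of simple connectivity is equivalent to the combinatorial one (citing \cite{reynaud2003algebraic}) and then states the corollary without further argument. You have merely spelled out the standard edge-path-group argument in detail, which the paper leaves implicit.
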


With these preliminaries in hand, we are ready to give our construction.

\section[Simplicial complexes from colored $k$-configurations]{Construction of simplicial complexes from colored $k$-configurations}
\label{construction-section}

We now give a construction taking a colored $(k + 1)$-configuration and giving a
labeling of the complex $W_{n}$ defined in \cref{lattice}. After taking a
quotient of $W_{n}$ according to this labeling, we obtain a simplicial complex
with properties analogous to the 7-vertex triangulation of $T^{2}$
(\cref{quotient}).

\begin{lemma}
\label{construction}
  Let $\mathcal{C}$ be a colored $(k + 1)$-configuration. Then there exists a
  surjective labeling $\ell \colon V(W_{k}) \to P(\mathcal{C})$ of the vertices
  of $W_{k}$, such that:

  \begin{enumerate}[(1)]
    \item
    If $\vec{u}, \vec{v} \in V(W_{k})$ are adjacent, then $\ell(\vec{u}) \ne
    \ell(\vec{v})$.

    \item
    Given $\vec{u} \in V(W_{k})$ and a point $p \in P(\mathcal{C})$, there is at
    most one vertex $\vec{v} \in V(W_{k})$ adjacent to $\vec{u}$ with
    $\ell(\vec{v}) = p$. (If $\mathcal{C}$ is also a projective plane, then ``at
    most one'' can be replaced with ``exactly one.'')

    \item
    If $\vec{u}, \vec{v} \in V(W_{k})$ satisfy $\ell(\vec{u}) = \ell(\vec{v})$,
    then $\ell(\vec{u} + \vec{w}) = \ell(\vec{v} + \vec{w})$ for all $\vec{w}
    \in V(W_{k})$. (That is, $\ell$ is invariant under translation by $\vec{u} -
    \vec{v}$.)

  \end{enumerate}
\end{lemma}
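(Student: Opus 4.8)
The plan is to build the labeling $\ell$ by a breadth-first "unfolding" of the colored configuration $\mathcal{C}$ onto $W_{k}$, using the edge-coloring $\chi$ to decide, at each vertex $\vec{x}$ of $W_{k}$, which point or line of $\mathcal{C}$ sits there. Recall (\cref{facets}) that each vertex $\vec{x} \in V(W_{k})$ lies in exactly $k+1$ positive facets $\{\vec{x}+\vec{e}_i\}$ and $k+1$ negative facets $\{\vec{x}-\vec{e}_j\}$. The idea is to identify a positive facet rooted near $\vec{x}$ with an incidence (point-on-line) of $\mathcal{C}$, so that moving from $\vec{x}$ in the coordinate direction $i$ corresponds to following an edge of color $i$ in $G(\mathcal{C})$. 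Concretely: declare that $\ell$ assigns points of $\mathcal{C}$ to vertices $\vec{x}$ with coordinate sum $\equiv 0$ in some fixed sense and lines to the others — more precisely, since the natural bipartition of $W_k$ comes from parity, I will arrange $\ell$ so that the two endpoints of each edge $\vec{x}, \vec{x}+\vec{e}_i-\vec{e}_j$ get, alternately along a facet, a point and then the lines through it. Actually the cleanest formulation: orient so that a positive facet $\{\vec{x}+\vec{e}_i\}_{i\in[k+1]}$ at $\vec{x}$ (sum $-1$) maps to a point $p$ of $\mathcal{C}$ together with an enumeration of its $k+1$ lines indexed by $[k+1]$; moving by $\vec{e}_i$ lands on the line of color $i$, and the negative facets at that line-vertex recover the points on that line, indexed by color. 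Starting from a chosen root vertex $\vec{0} \mapsto p_0$ with a chosen color-labeling of the incidences at $p_0$, every other vertex of $W_k$ is reached by a path, and we define $\ell$ by following the corresponding sequence of colored edges in $G(\mathcal{C})$.

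The substance of the argument is \textbf{well-definedness}: the value $\ell(\vec{v})$ must not depend on the path from $\vec{0}$ to $\vec{v}$ in $W_k$. By \cref{simply-connected-cor}, any two such paths are homotopic via the elementary moves (1) backtracking $u,v,u\simeq u$ and (2) contracting a triangle $u,v,w\simeq u,w$. Invariance under backtracking is immediate, since $\phi_c$ is an involution on each edge (following color $c$ out and back returns you). Invariance under triangle-contraction is exactly where the 6-cycle property enters: a triangle of $W_k$ has vertices $\vec{x}, \vec{x}+\vec{e}_i-\vec{e}_j, \vec{x}+\vec{e}_i-\vec{e}_l$ (or the negative analogue), i.e. it corresponds to a length-2 step along colors, and going around it corresponds to a word of length $6$ in the $\phi$'s of the form $\phi_c\phi_b\phi_a\phi_c\phi_b\phi_a$ — which \cref{6-cycle} says is the identity. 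So I would carefully unwind the combinatorics of one triangle of $W_k$, translate its boundary walk into a walk in $G(\mathcal{C})$, check that it reads off as such a 6-cycle word (this is the one genuine bookkeeping computation), and conclude that $\ell$ respects homotopy; connectivity of $W_k$ then gives a well-defined $\ell$ on all vertices, and connectivity of $G(\mathcal{C})$ gives surjectivity onto $P(\mathcal{C})$.

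With $\ell$ in hand, properties (1)–(3) should follow quickly. Property (1): adjacent vertices $\vec{u},\vec{v}$ differ by $\vec{e}_i-\vec{e}_j$, so $\ell(\vec{v})$ is reached from $\ell(\vec{u})$ by one edge of $G(\mathcal{C})$, hence lies on the opposite side of the bipartition (point vs.\ line) or at least is a \emph{distinct} vertex of $G(\mathcal{C})$; since $G(\mathcal{C})$ has no loops, $\ell(\vec{u})\neq\ell(\vec{v})$. Property (2): the $k+1$ neighbors of $\vec{u}$ in a fixed "sign class" map, via the colors $1,\dots,k+1$, to the $k+1$ distinct neighbors of $\ell(\vec{u})$ in $G(\mathcal{C})$; neighbors in the other sign class, one step further, are governed by the no-4-cycle condition of the configuration (condition (1) in the definition), which forces distinctness — and when $\mathcal{C}$ is a projective plane, the "exactly one" strengthening is precisely the projective-plane axiom that any two points lie on a common line. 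Property (3): if $\ell(\vec{u})=\ell(\vec{v})$, then for any $\vec{w}$ we may compute $\ell(\vec{u}+\vec{w})$ by first walking from $\vec{0}$ to $\vec{u}$ and then along a translated copy of a path from $\vec{0}$ to $\vec{w}$; since the walk in $G(\mathcal{C})$ defining $\ell$ depends only on the \emph{sequence of colors} traversed, not on the starting vertex, and $\ell(\vec{u})=\ell(\vec{v})$ means the two translated walks start at the same vertex of $G(\mathcal{C})$ with the same local color data, they end at the same vertex, giving $\ell(\vec{u}+\vec{w})=\ell(\vec{v}+\vec{w})$. The main obstacle is purely the triangle computation in the well-definedness step — getting the indexing conventions (which coordinate direction corresponds to which color, and how point-vertices and line-vertices alternate) set up so that the boundary of a $W_k$-triangle literally becomes a $\phi_c\phi_b\phi_a\phi_c\phi_b\phi_a$ word; once the conventions are pinned down, everything else is bookkeeping.
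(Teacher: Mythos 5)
Your overall strategy is the paper's: propagate labels along paths in $W_{k}$ using the edge coloring, reduce path-independence to homotopy invariance via \cref{simply-connected-cor}, kill backtracks by involutivity of the $\phi_{c}$ and triangles by the 6-cycle property, then read off (1)--(3) from the local rule. But the setup you defer as ``getting the indexing conventions pinned down'' contains a genuine obstruction. The point/line alternation you start from cannot work: every vertex of $A_{k}$ has coordinate sum $0$, so there is no parity class to alternate over, and the 1-skeleton of $W_{k}$ is not bipartite (it contains triangles), so no labeling can place points and lines on opposite ends of every edge. The resolution, which you circle around but never state, is that \emph{every} vertex receives a point of $\mathcal{C}$ (as the codomain $P(\mathcal{C})$ in the statement demands), and a single edge of $W_{k}$, i.e.\ a step $\vec{v} = \vec{u} - \vec{e}_{i} + \vec{e}_{j}$, corresponds to the \emph{two-step} walk $\phi_{j} \circ \phi_{i}$ in $G(\mathcal{C})$ (point $\to$ line of color $i$ $\to$ point of color $j$); lines never label vertices and surface only as the positive facets. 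This is also what makes a triangle boundary a word of length $6$ rather than $3$.

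With that convention fixed, your claim that every triangle boundary reads as $\phi_{c}\circ\phi_{b}\circ\phi_{a}\circ\phi_{c}\circ\phi_{b}\circ\phi_{a}$ is only half true. By \cref{facets} a triangle lies in a positive or a negative facet. For a positive facet $\{\vec{x}+\vec{e}_{i}, \vec{x}+\vec{e}_{j}, \vec{x}+\vec{e}_{k'}\}$, the contraction $\vec{u},\vec{v},\vec{w} \simeq \vec{u},\vec{w}$ compares $\phi_{k'}\circ\phi_{j}\circ\phi_{j}\circ\phi_{i}$ with $\phi_{k'}\circ\phi_{i}$, which collapses by involutivity alone and needs no hypothesis; only the negative facets produce the genuine word $\phi_{j}\circ\phi_{k'}\circ\phi_{i}\circ\phi_{j}\circ\phi_{k'}\circ\phi_{i}$, and that is exactly where the 6-cycle property of \cref{6-cycle} is consumed. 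Your arguments for (1)--(3) are otherwise sound and match the paper's; for (2), the precise point is that a neighbor of $\vec{u}$ is indexed by an ordered pair $(i, j)$, and such a pair with $\ell(\vec{v}_{i,j}) = p$ both determines and is determined by a common line of $\ell(\vec{u})$ and $p$, of which there is at most one (exactly one in a projective plane).
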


\begin{proof}
  Given adjacent vertices $\vec{u}, \vec{v} \in V(W_{k})$ and a label
  $\ell(\vec{u})$, we will establish a rule for determining $\ell(\vec{v})$. We
  may uniquely write $\vec{v} = \vec{u} - \vec{e}_{i} + \vec{e}_{j}$ for
  distinct $i, j \in [k + 1]$; then our rule proceeds as follows:
  \begin{itemize}
    \item
    Let $l$ be the line of $\mathcal{C}$ incident with $p$ such that $(p, l)$
    has color $i$ in $\mathcal{C}$.

    \item
    Let $q$ be the point of $\mathcal{C}$ incident with $l$ such that $(q, l)$
    has color $j$ in $\mathcal{C}$.

    \item
    Take $\ell(\vec{v}) = q$.

  \end{itemize}
  We can also restate the rule using the functions $\phi_{c}$ from
  \cref{6-cycle}:
  $$\ell(\vec{v}) = (\phi_{j} \circ \phi_{i})(\ell(\vec{u}))$$
  (That is, in $G(\mathcal{C})$, following the unique path from the point
  $\ell(\vec{u})$ along edges with colors $i, j$, in order, brings us to
  $\ell(\vec{v})$.)

  \bigskip

  Now let $\gamma$ be a path in $W_{k}$ from $\vec{u}$ to $\vec{v}$, and let $p
  \in P(\mathcal{C})$. Consider assigning $\ell(\vec{u}) = p$, and then
  successively applying the rule above on each pair of consecutive vertices of
  $\gamma$, until we obtain a label $\ell(\vec{v}) = q$ at the end of $\gamma$.
  We define $\ell_{\gamma} \colon P(\mathcal{C}) \to P(\mathcal{C})$ by
  $\ell_{\gamma}(p) = q$. We claim that $\ell_{\gamma}$ respects path homotopy:

  \bigskip

  \noindent
  \emph{Claim.} Let $\gamma, \gamma'$ be paths in $W_{k}$. If $\gamma \simeq
  \gamma'$, then $\ell_{\gamma} = \ell_{\gamma'}$.

  \bigskip

  \noindent
  \emph{Proof of claim.} We induct on the definition of $(\simeq)$. Cases (3),
  (4), (5), (6) are clear, so we turn to (1), (2):

  \bigskip

  \noindent
  \emph{Case (1).} $\vec{u}, \vec{v} \in V(W_{k})$ are adjacent; $\gamma$ is
  $\vec{u}, \vec{v}, \vec{u}$, and $\gamma'$ is $\vec{u}$.

  \bigskip

  \noindent
  \emph{Proof of case (1).} Write $\vec{v} = \vec{u} - \vec{e}_{i} +
  \vec{e}_{j}$ as above. Then we have
  $$\ell_{\gamma} = \phi_{i} \circ \phi_{j} \circ \phi_{j} \circ \phi_{i} =
  \phi_{i} \circ \phi_{i} = 1_{P(\mathcal{C})} = \ell_{\gamma'},$$
  where $1_{P(\mathcal{C})}$ is the identity function on $P(\mathcal{C})$.

  \bigskip

  \noindent
  \emph{Case (2).} $\vec{u}, \vec{v}, \vec{w} \in V(W_{k})$ are pairwise
  adjacent; $\gamma$ is $\vec{u}, \vec{v}, \vec{w}$, and $\gamma'$ is $\vec{u},
  \vec{w}$.

  \bigskip

  \noindent
  \emph{Proof of case (2).} By \cref{facets}, $\vec{u}, \vec{v}, \vec{w}$ are
  contained in a unique facet $F$ of $W_{k}$. On one hand, if $F$ is positive
  (see \cref{facets}), then we have
  $$\vec{u} = \vec{x} + \vec{e}_{i},\qquad
    \vec{v} = \vec{x} + \vec{e}_{j},\qquad
    \vec{w} = \vec{x} + \vec{e}_{k'}.$$
  As a result, we have
  $$\ell_{\gamma} = \phi_{k'} \circ \phi_{j} \circ \phi_{j} \circ \phi_{i} =
  \phi_{k'} \circ \phi_{i} = \ell_{\gamma'}.$$

  On the other hand, if $F$ is negative, then we have
  $$\vec{u} = \vec{x} - \vec{e}_{i},\qquad
    \vec{v} = \vec{x} - \vec{e}_{j},\qquad
    \vec{w} = \vec{x} - \vec{e}_{k'}.$$
  As a result, we have
  $$\ell_{\gamma} = \phi_{j} \circ \phi_{k'} \circ \phi_{i} \circ
  \phi_{j},\qquad \ell_{\gamma'} = \phi_{i} \circ \phi_{k'}.$$
  Since $\phi_{i}, \phi_{j}, \phi_{k'}$ are involutions, we have
  $\ell_{\gamma'}^{-1} = \phi_{k'} \circ \phi_{i}$, so
  $$\ell_{\gamma} \circ \ell_{\gamma'}^{-1} = \phi_{j} \circ \phi_{k'} \circ
  \phi_{i} \circ \phi_{j} \circ \phi_{k'} \circ \phi_{i} = 1_{P}$$
  by the 6-cycle property. Precomposing by $\ell_{\gamma'}$ gives $\ell_{\gamma}
  = \ell_{\gamma'}$. This completes the proof of the case and the claim.

  \bigskip

  We now define $\ell$. Fix $\vec{v}_{0} \in V(W_{k})$ and $p_{0} \in
  P(\mathcal{C})$, and for each $\vec{v} \in V(W_{k})$ and each path $\gamma$
  from $\vec{v}_{0}$ to $\vec{v}$, define $\ell(\vec{v}) =
  \ell_{\gamma}(p_{0})$. Since $W_{k}$ is connected, there is at least one such
  $\gamma$. Also, $\ell(\vec{v})$ does not depend on $\gamma$, since if $\gamma,
  \gamma'$ are two paths from $\vec{v}_{0}$ to $\vec{v}$, then $\gamma \simeq
  \gamma'$ by \cref{simply-connected-cor}, so $\ell_{\gamma} = \ell_{\gamma'}$
  by the claim above. Since $\mathcal{C}$ is connected, $\ell$ is surjective.

  To show that property (1) holds, let $\gamma$ be a path in $W_{k}$ from
  $\vec{v}_{0}$ to $\vec{u}$, and consider the path $\gamma, \vec{v}$ obtained
  by appending $\vec{v}$ to $\gamma$. Then write $\vec{v} = \vec{u} -
  \vec{e}_{i} + \vec{e}_{j}$ as above, where $i \ne j$. We have 
  $$\ell(\vec{v}) = \ell_{\gamma, \vec{v}}(p_{0}) = (\phi_{j} \circ
  \phi_{i})(\ell_{\gamma}(p_{0})) = (\phi_{j} \circ \phi_{i})(\ell(\vec{u})).$$
  It suffices to prove that $(\phi_{j} \circ \phi_{i})(p) \ne p$ for all $p \in
  P(\mathcal{C})$. This is clear in $G(\mathcal{C})$; if we follow edges of
  distinct colors $i, j$, in order, we cannot arrive back at the starting
  vertex.

  To show that property (2) holds, let $\gamma$ be a path in $W_{k}$ from
  $\vec{v}_{0}$ to $\vec{u}$. The neighbors of $\vec{u}$ in $W_{k}$ are
  $\vec{v}_{i, j} = \vec{u} - \vec{e}_{i} + \vec{e}_{j}$ for $i, j \in [k + 1]$,
  $i \ne j$. Therefore, $\ell(\vec{v}_{i, j}) = (\phi_{j} \circ
  \phi_{i})(\ell(\vec{u}))$; equivalently, in $G(\mathcal{C})$,
  $\ell(\vec{v}_{i, j})$ is obtained from $\ell(\vec{u})$ by following edges of
  colors $i, j$, in order. Any $i, j$ with $\ell(\vec{v}_{i, j}) = p$ correspond
  to a unique line incident with $\ell(\vec{u})$ and $p$. But $\ell(\vec{u}), p$
  lie on at most one common line, so there is at most one vertex $\vec{v}_{i,
  j}$ with $\ell(\vec{v}_{i, j}) = p$. (For a projective plane $\mathcal{C}$,
  replace ``at most'' with ``exactly.'')

  To show that property (3) holds, first suppose $\vec{w}$ is adjacent to the
  zero vector, say $\vec{w} = -\vec{e}_{i} + \vec{e}_{j}$. Let $\gamma$ be a
  path in $W_{k}$ from $\vec{v}_{0}$ to $\vec{u}$. Then
  $$\ell(\vec{u} + \vec{w})
    = \ell_{\gamma, \vec{w}}(p_{0})
    = (\phi_{j} \circ \phi_{i})(\ell_{\gamma}(p_{0}))
    = (\phi_{j} \circ \phi_{i})(\ell(\vec{u})).$$
  Hence if $\ell(\vec{u}) = \ell(\vec{v})$, then $\ell(\vec{u} + \vec{w}) =
  \ell(\vec{v} + \vec{w})$. We obtain property (3) for all $\vec{w}$ by
  induction on the distance $d(\vec{0}, \vec{w})$ in $W_{k}$. This completes the
  proof.
\end{proof}

The construction in \cref{construction} depends on choices of $\vec{v}_{0} \in
A_{k}$, $p_{0} \in P(\mathcal{C})$. We now show that these choices are
immaterial:

\begin{lemma}
\label{construction-choices}
  Let $\mathcal{C}$ be a colored $(k + 1)$-configuration, let $\vec{v}_{0},
  \vec{v}_{0}' \in A_{k}$, and let $p_{0}, p_{0}' \in P(\mathcal{C})$. Let
  $\ell$ be the labeling given by \cref{construction} using $\vec{v}_{0},
  p_{0}$, and let $\ell'$ be the labeling given by \cref{construction} using
  $\vec{v}_{0}', p_{0}'$. There exists a permutation $\sigma \colon
  P(\mathcal{C}) \to P(\mathcal{C})$ with $\ell' = \sigma \circ \ell$.
\end{lemma}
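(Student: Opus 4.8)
The plan is to trace through the recursive rule defining $\ell$ (resp.\ $\ell'$) and exploit the ``path-homotopy invariance'' established in the proof of \cref{construction}. Recall that for any path $\gamma$ in $W_{k}$, the proof of \cref{construction} constructs a map $\ell_{\gamma} \colon P(\mathcal{C}) \to P(\mathcal{C})$ by starting with an arbitrary label at the first vertex and applying the rule $\ell(\vec{v}) = (\phi_{j} \circ \phi_{i})(\ell(\vec{u}))$ along each edge; moreover $\ell_{\gamma}$ depends only on the homotopy class of $\gamma$ (the Claim in that proof), and by \cref{simply-connected-cor} any two paths in $W_{k}$ with the same endpoints are homotopic. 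First I would observe that each $\ell_{\gamma}$ is in fact a bijection: each edge step applies $\phi_{j} \circ \phi_{i}$, which is a composition of the involutions $\phi_{i}, \phi_{j}$, hence invertible; so $\ell_{\gamma}$ is a composition of bijections and therefore a permutation of $P(\mathcal{C})$. This is the key structural fact that makes the statement work.

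Next I would reduce to comparing the two labelings vertex by vertex. Fix a path $\delta$ in $W_{k}$ from $\vec{v}_{0}'$ to $\vec{v}_{0}$, which exists since $W_{k}$ is connected. For an arbitrary $\vec{v} \in V(W_{k})$, pick a path $\gamma$ from $\vec{v}_{0}$ to $\vec{v}$; then $\delta \cdot \gamma$ is a path from $\vec{v}_{0}'$ to $\vec{v}$, so by definition $\ell'(\vec{v}) = \ell_{\delta \cdot \gamma}(p_{0}')$ and $\ell(\vec{v}) = \ell_{\gamma}(p_{0})$. From the inductive construction of $\ell_{(-)}$ along concatenations we have $\ell_{\delta \cdot \gamma} = \ell_{\gamma} \circ \ell_{\delta}$ (applying the rule along $\delta$ first, then along $\gamma$). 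Hence
$$\ell'(\vec{v}) = \ell_{\gamma}\bigl(\ell_{\delta}(p_{0}')\bigr).$$
Now set $p_{1} = \ell_{\delta}(p_{0}') \in P(\mathcal{C})$; since $\ell_{\gamma}$ is a bijection and $\ell(\vec{v}) = \ell_{\gamma}(p_{0})$, we may define $\sigma = \ell_{\gamma} \circ (\text{the bijection sending } p_0 \text{ to } p_1) \circ \ell_{\gamma}^{-1}$ — but this a priori depends on $\gamma$, so the cleaner route is different: define $\sigma$ directly as the permutation of $P(\mathcal{C})$ built from $\ell_{\gamma}$.

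The right way to package this is: since $\ell_{\gamma}$ is a bijection, there is a unique permutation $\sigma_{\gamma}$ of $P(\mathcal{C})$ with $\sigma_{\gamma} \circ \ell_{\gamma}(p) = \ell_{\gamma}\bigl(\ell_{\delta}(\cdots)\bigr)$ — no. I will instead argue as follows. Both $\ell$ and $\ell'$ are surjective by \cref{construction}, and I claim the fibers coincide: for $\vec{u}, \vec{v} \in V(W_{k})$, $\ell(\vec{u}) = \ell(\vec{v})$ iff $\ell'(\vec{u}) = \ell'(\vec{v})$. Granting this, the common partition of $V(W_{k})$ into fibers lets us define $\sigma$ by $\sigma(\ell(\vec{v})) = \ell'(\vec{v})$, which is well defined (fibers agree), surjective (both $\ell, \ell'$ surjective), hence a permutation of the finite set $P(\mathcal{C})$, and satisfies $\ell' = \sigma \circ \ell$ by construction. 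To prove the fiber claim: from the displayed identity $\ell'(\vec{v}) = \ell_{\gamma}(p_1)$ with $\gamma$ a path from $\vec{v}_0$ to $\vec{v}$, together with $\ell(\vec{v}) = \ell_{\gamma}(p_0)$, we see that the value of $\ell_{\gamma}$ on the pair $\{p_0, p_1\}$ determines both labels at $\vec{v}$; but actually the clean statement is that for any two vertices $\vec{u},\vec{v}$, choosing a path $\gamma$ from $\vec{u}$ to $\vec{v}$ gives $\ell(\vec{v}) = \ell_\gamma(\ell(\vec{u}))$ and $\ell'(\vec{v}) = \ell_\gamma(\ell'(\vec{u}))$ (both follow from $\ell_{\gamma_0 \cdot \gamma} = \ell_\gamma \circ \ell_{\gamma_0}$ where $\gamma_0$ runs from the respective basepoint to $\vec{u}$). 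Since $\ell_\gamma$ is injective, $\ell(\vec{u}) = \ell(\vec{v})$ forces $\ell_\gamma(\ell(\vec{u})) = \ell(\vec{u})$, i.e.\ $\ell(\vec{v}) = \ell(\vec{u})$...

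Let me state the cleanest version for the writeup: for vertices $\vec u,\vec v$ and any path $\gamma$ from $\vec u$ to $\vec v$ we have $\ell(\vec v)=\ell_\gamma(\ell(\vec u))$ and likewise for $\ell'$; hence $\ell(\vec u)=\ell(\vec v) \iff \ell_\gamma$ fixes $\ell(\vec u)$, and separately $\ell'(\vec u)=\ell'(\vec v)\iff \ell_\gamma$ fixes $\ell'(\vec u)$. These are not obviously equivalent, so instead I use: $\ell(\vec u) = \ell(\vec v)$ implies, by property (3) of \cref{construction}, that $\ell$ is invariant under translation by $\vec w := \vec u - \vec v$, and conversely property (3) type reasoning shows the set of such $\vec w$ is a subgroup $\Lambda \le A_k$; I claim $\Lambda$ is the same for $\ell$ and $\ell'$. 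Indeed $\ell(\vec u) = \ell(\vec u + \vec w)$ for all $\vec u$ iff $\ell_{\gamma_w}$ is the identity, where $\gamma_w$ is any path from $\vec 0$ to $\vec w$ — and this condition does not mention the basepoint or $p_0$ at all. Hence $\Lambda = \{\vec w : \ell_{\gamma_w} = \mathrm{id}\}$ is intrinsic, so the fibers of $\ell$ and of $\ell'$ are both exactly the cosets of $\Lambda$, proving the fiber claim and completing the proof.

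The main obstacle, and the step to be most careful about, is verifying the concatenation identity $\ell_{\gamma_0 \cdot \gamma} = \ell_{\gamma} \circ \ell_{\gamma_0}$ and the fact that $\Lambda = \{\vec w : \ell_{\gamma_w} = \mathrm{id}\}$ is genuinely basepoint-independent — this is where all the work in \cref{construction} (the 6-cycle property, homotopy invariance, \cref{simply-connected-cor}) is really being used, and it must be cited precisely rather than re-derived.
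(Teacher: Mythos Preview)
Your final argument is correct: the fibers of $\ell$ and $\ell'$ are both cosets of the subgroup $\Lambda = \{\vec w : \ell_{\gamma_{\vec w}} = \mathrm{id}\}$, and since this description makes no reference to a basepoint or to $p_0$, the fibers coincide and $\sigma(\ell(\vec v)) := \ell'(\vec v)$ is well defined. One point you should state explicitly in a clean write-up is that $\ell_\gamma$ depends only on the sequence of edge differences $\vec v_{i+1}-\vec v_i$ (hence is translation-invariant), which is what lets you compute $\ell(\vec u + \vec w)$ using a path $\gamma_{\vec w}$ based at~$\vec 0$ rather than at~$\vec u$; without this the equivalence ``$\ell(\vec u)=\ell(\vec u+\vec w)$ for all $\vec u$ $\Leftrightarrow$ $\ell_{\gamma_{\vec w}}=\mathrm{id}$'' does not follow from the concatenation identity alone.

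The paper reaches the same definition of $\sigma$ but by a shorter route: it first picks $\vec w$ with $\ell'(\vec 0)=\ell(\vec w)$ (surjectivity of $\ell$) and observes, from the very translation-invariance of the edge rule just mentioned, that $\ell'(\vec v)=\ell(\vec v+\vec w)$ for \emph{all} $\vec v$. Well-definedness of $\sigma$ then drops out immediately from property~(3) of \cref{construction}, with no need to characterise the period lattice intrinsically. Your approach is a bit more conceptual (it isolates $\Lambda$ as a basepoint-free invariant), while the paper's is more economical; both rely on the same underlying ingredients from \cref{construction} and \cref{simply-connected-cor}.
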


\begin{proof}
  Take $\vec{w}$ with $\ell'(\vec{0}) = \ell(\vec{w})$ by the surjectivity of
  $\ell$. Then $\ell'(\vec{v}) = \ell(\vec{v} + \vec{w})$ for all $\vec{v} \in
  V(W_{k})$, since the construction in \cref{construction} assigns
  $\ell(\vec{v})$ based only on an adjacent label $\ell(\vec{u})$ and indices
  $i, j$ determined from $\vec{v} - \vec{u}$.

  Now define $\sigma \colon P(\mathcal{C}) \to P(\mathcal{C})$ by
  $\sigma(\ell(\vec{v})) = \ell'(\vec{v})$ for all $\vec{v} \in V(W_{k})$. If
  $\ell(\vec{u}) = \ell(\vec{v})$, then $\ell'(\vec{u}) = \ell(\vec{u} +
  \vec{w}) = \ell(\vec{v} + \vec{w}) = \ell'(\vec{v})$ by property (3) of
  \cref{construction}, so $\sigma$ is well-defined. Since $\ell$ is surjective,
  $\sigma$ is a function on $P(\mathcal{C})$. Similarly, we may define an
  inverse function $\sigma^{-1} \colon P(\mathcal{C}) \to P(\mathcal{C})$ by
  $\sigma(\ell'(\vec{v})) = \ell(\vec{v})$, so $\sigma$ is a permutation. We
  have $\ell' = \sigma \circ \ell$ by the defining property of $\sigma$.
\end{proof}

Now we give our main result:

\begin{theorem}
\label{quotient}
  Let $\mathcal{C}$ be a colored $(k + 1)$-configuration with $n$ points and $n$
  lines. Then there exists a connected $k$-dimensional simplicial complex
  $X(\mathcal{C})$ with $\pi_{1}(X(\mathcal{C})) \cong \mathbb{Z}^{k}$, such
  that:
  \begin{enumerate}[(1)]
    \item
    $X(\mathcal{C})$ has exactly $n$ vertices.

    \item
    $X(\mathcal{C})$ contains a copy of $\mathcal{C}$, consisting of $n$ facets
    of $X(\mathcal{C})$, and a copy of the dual $(k + 1)$-configuration
    $\mathcal{C}^{*}$, consisting of $n$ other facets of $X(\mathcal{C})$. These
    two copies fully describe $X(\mathcal{C})$, in that these $2n$ facets are
    all the facets of $X(\mathcal{C})$, and every face of $X(\mathcal{C})$ is
    contained in a facet.

    \item
    Suppose $G$ acts on $\mathcal{C}$ via a group action $\rho$. Then $G$ acts
    on $X(\mathcal{C})$ via $\rho_{P}$, the action induced by $\rho$ on the
    points of $\mathcal{C}$.

    \item
    If $\mathcal{C}$ is also a projective plane, then $X(\mathcal{C})$ is
    2-neighborly.

  \end{enumerate}
\end{theorem}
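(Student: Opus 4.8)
The plan is to realize $X(\mathcal{C})$ as a quotient of $W_{k}$ by a finite-index sublattice of $A_{k}$. Fix a labeling $\ell\colon V(W_{k})\to P(\mathcal{C})$ as in \cref{construction} and set $\Lambda=\{\vec w\in A_{k}:\ell(\vec w)=\ell(\vec 0)\}$. First I would use property~(3) of \cref{construction} to show that $\Lambda$ is a subgroup of $A_{k}$, that $\ell$ is $\Lambda$-invariant, and that $\ell(\vec u)=\ell(\vec v)$ holds exactly when $\vec u-\vec v\in\Lambda$; since $\ell$ is surjective onto the $n$-element set $P(\mathcal{C})$, this forces $[A_{k}:\Lambda]=n$, and in particular $\Lambda\cong\mathbb{Z}^{k}$. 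I then define $X(\mathcal{C})=W_{k}/\Lambda$, using the action of $\Lambda$ restricted from \cref{action}, with quotient map $\pi\colon W_{k}\to X(\mathcal{C})$.

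Next I would check that this quotient is genuinely a simplicial complex and that $\pi$ is a covering map. As $\Lambda\le A_{k}$ is torsion-free it acts freely on the vertex set $A_{k}$, and property~(1) of \cref{construction} together with the $\Lambda$-invariance of $\ell$ shows that no simplex of $W_{k}$ contains two vertices lying in a common $\Lambda$-orbit; hence the open stars of vertices in a common orbit are disjoint, which is exactly the condition for $\pi$ to be a covering. Property~(2) shows that two faces of $W_{k}$ sharing a vertex and lying in a common orbit must coincide, whence a face of $X(\mathcal{C})$ is determined by its vertex set and $X(\mathcal{C})$ is a bona fide simplicial complex. Since $W_{k}$ is simply connected (\cref{simply-connected}), $\pi$ is the universal cover, so $\pi_{1}(X(\mathcal{C}))\cong\Lambda\cong\mathbb{Z}^{k}$; connectedness and $k$-dimensionality pass to the quotient, and $X(\mathcal{C})$ has $|A_{k}/\Lambda|=n$ vertices, which is~(1).

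For~(2) I would use \cref{facets}: every facet of $W_{k}$ is positive or negative (and, for $k\ge2$, not both), so the facets of $X(\mathcal{C})$ split into $n$ orbits of positive facets and $n$ orbits of negative facets; since $\pi$ is a covering, every face of $X(\mathcal{C})$ lifts and hence lies in a facet, and every facet of $X(\mathcal{C})$ comes from one of $W_{k}$. The content is to identify the two families. Running the rule of \cref{construction} around a positive facet rooted at $\vec x$ shows $\ell(\vec x+\vec e_{i})$ is exactly the point of $\mathcal{C}$ on a fixed line $l$ whose incidence with $l$ has color $i$; this gives a bijection between orbits of positive facets and lines of $\mathcal{C}$ preserving incidence and the color of each incidence, exhibiting the vertices together with the positive facets as a copy of $\mathcal{C}$. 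Running the rule around a negative facet rooted at $\vec y$ gives $\ell(\vec y-\vec e_{j})=\phi_{1}(\phi_{j}(\ell(\vec y-\vec e_{1})))$, so its vertices are the points $q_{j}$ lying with incidence-color $1$ on the lines $m_{j}=\phi_{j}(p)$ through the single point $p=\ell(\vec y-\vec e_{1})$; identifying each point $q$ of $\mathcal{C}$ with the line $\phi_{1}(q)$ (a point of $\mathcal{C}^{*}$) and each negative facet with $p$ (a line of $\mathcal{C}^{*}$) exhibits the vertices together with the negative facets as a copy of $\mathcal{C}^{*}$.

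Finally,~(4) is immediate: if two vertices have distinct labels $p,p'$, the projective-plane form of property~(2) of \cref{construction} produces a neighbor of a lift of the first labeled $p'$, so the two vertices are adjacent in $X(\mathcal{C})$. For~(3): an automorphism of $\mathcal{C}$ permutes the lines of $\mathcal{C}$ compatibly with its action on points, and, because a negative facet admits a description via each of the $k+1$ colors, it also permutes the negative facets; hence $\rho_{P}(g)$ carries facets to facets, is a simplicial automorphism of $X(\mathcal{C})$, and $g\mapsto\rho_{P}(g)$ is the required action. I expect the main obstacle to be part~(2): making the ``run the labeling rule around a facet'' computations precise and checking that the resulting correspondences on positive and negative facets are well defined on $\Lambda$-orbits, bijective, and incidence- and color-preserving. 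The negative case, where one must spot the identification of vertices with points of $\mathcal{C}^{*}$ via $\phi_{1}$, is the subtlest point; verifying that $\pi$ is a covering is by comparison routine once properties~(1) and~(2) are in hand.
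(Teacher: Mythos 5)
Your proposal is correct and follows essentially the same route as the paper: quotient $W_{k}$ by the stabilizer lattice $\Lambda=H$ of a label, invoke \cref{simply-connected} and the covering-space action to get $\pi_{1}\cong\mathbb{Z}^{k}$, and match positive facets with lines of $\mathcal{C}$ and negative facets with $\mathcal{C}^{*}$ via the ``root'' point $\ell(\vec{y}-\vec{e}_{c})$. Your two small deviations---deducing $\Lambda\cong\mathbb{Z}^{k}$ from finite index rather than a spanning argument, and handling property~(3) on negative facets via their color-symmetric description rather than the paper's ``$\rho_{P}$ is a translation'' argument---are both valid and, if anything, slightly cleaner.
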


\begin{proof}
  Apply \cref{construction} to obtain a labeling $\ell \colon V(W_{k}) \to
  P(\mathcal{C})$. Then define
  $$H = \{\vec{v} \in A_{k} : \ell(\vec{v}) = \ell(\vec{0})\}.$$
  For $\vec{u}, \vec{v} \in A_{k}$, with $\ell(\vec{u}), \ell(\vec{v}) = 0$,
  property~(3) of \cref{construction} implies
  $$\ell(\vec{u} + \vec{v}) = \ell(\vec{0} + \vec{v}) = \ell(\vec{v}) =
  \ell(\vec{0}).$$
  Likewise, property~(3) also implies
  $$\ell(-\vec{v}) = \ell(\vec{0} - \vec{v}) = \ell(\vec{v} - \vec{v}) =
  \ell(\vec{0}).$$
  Therefore, $H$ is an additive subgroup of $A_{k}$. Moreover, for any two
  $\vec{u}, \vec{v} \in A_{k}$, we have the following chain of equivalences:
  $$\ell(\vec{u}) = \ell(\vec{v})
    \quad\Leftrightarrow\quad \ell(\vec{u} - \vec{v}) = \ell(\vec{0})
    \quad\Leftrightarrow\quad \vec{u} - \vec{v} \in H.$$
  Hence the orbits of $H$ in $A_{k}$ correspond bijectively to points $p \in
  P(\mathcal{C})$ in the image of $\ell$. Since $\ell$ is surjective, there are
  exactly $n$ such orbits.

  We claim that $H$ spans the vector space $\{(x_{1}, \ldots , x_{k + 1}) \in
  \mathbb{R}^{k + 1} : \sum_{i}x_{i} = 0\}$. Suppose otherwise; then there
  exists $\vec{v} \in A_{k}$ with $\vec{v} \not\in \text{span }H$. Then we also
  have $\lambda\vec{v} \not\in \text{span }H$ for all nonzero $\lambda \in
  \mathbb{Z}$. But then $\ell(\lambda \vec{v})$ is distinct for distinct
  $\lambda \in \mathbb{Z}$, a contradiction since $P(\mathcal{C})$ is finite.
  Therefore, $H$ is a $k$-dimensional lattice, so we have $H \cong
  \mathbb{Z}^{k}$.

  By \cref{action}, $A_{k}$ acts on $W_{k}$ by addition, so $H$ does also.
  Therefore, the quotient $W_{k} / H$ is a well-defined $k$-dimensional CW
  complex, and in fact is a simplicial complex by~(1), (2) of
  \cref{construction}. Since $H$ is a ``covering space action'' in the
  sense of Hatcher~\cite{hatcher2002algebraic}, and $W_{k}$ is simply connected,
  we have $\pi_{1}(W_{k} / H) \cong H \cong \mathbb{Z}^{k}$
  (See~\cite{hatcher2002algebraic}, Proposition 1.40). Hence we may take
  $X(\mathcal{C}) = W_{k} / H$.

  It remains to show properties (1) through (4) from the theorem statement. Note
  that property (1) holds since there are $n$ orbits of $H$ in $A_{k}$, and
  property (4) holds by~(2) of \cref{construction}.

  Next, we show property (2). The facets of $X(\mathcal{C})$ corresponding to
  positive facets of $W_{k}$ comprise a copy of $\mathcal{C}$; here is a typical
  incidence of color $i$:
  \begin{align*}
    \text{positive facet rooted at $\vec{x}$ (mod $H$)} &\leftrightsquigarrow
      \text{line $\phi_{i}(\ell(\vec{x} + \vec{e}_{i}))$ of $\mathcal{C}$}\\
    \text{point $\vec{x} + \vec{e}_{i}$ (mod $H$)} &\leftrightsquigarrow
      \text{point $\ell(\vec{x} + \vec{e}_{i})$ of $\mathcal{C}$}
  \end{align*}
  To show that this correspondence is well-defined, note that
  \begin{align*}
    \phi_{i}(\ell(\vec{x} + \vec{e}_{i}))
      &= \phi_{i}((\phi_{i} \circ \phi_{j})(\ell(\vec{x} + \vec{e}_{j})))\\
      &= \phi_{j}(\ell(\vec{x} + \vec{e}_{j})).
  \end{align*}

  Next, the facets of $X(\mathcal{C})$ corresponding to negative facets of
  $W_{k}$ comprise a copy of $\mathcal{C}^{*}$; here is a typical incidence of
  color $i$:
  \begin{align*}
    \text{negative facet rooted at $\vec{x}$ (mod $H$)} &\leftrightsquigarrow
      \text{point $\ell(\vec{x} - \vec{e}_{k + 1})$ of $\mathcal{C}$}\\
    \text{point $\vec{x} - \vec{e}_{i}$ (mod $H$)} &\leftrightsquigarrow
      \text{line $\phi_{i}(\ell(\vec{x} - \vec{e}_{k + 1}))$ of $\mathcal{C}$}
  \end{align*}
  To show that this correspondence is well-defined, note that if $\vec{x} -
  \vec{e}_{i} = \vec{y} - \vec{e}_{j}$,
  \begin{align*}
    \phi_{j}(\ell(\vec{y} - \vec{e}_{k + 1}))
      &= \phi_{j}(\ell(\vec{x} - \vec{e}_{k + 1} - \vec{e}_{i} + \vec{e}_{j}))\\
      &= \phi_{j}((\phi_{j} \circ \phi_{i})(\ell(\vec{x} - \vec{e}_{k + 1})))\\
      &= \phi_{i}(\ell(\vec{x} - \vec{e}_{k + 1})).
  \end{align*}
  (Note that the index $k + 1$ here could be replaced by any index in $[k +
  1]$.)

  To show that property (3) holds, it suffices to prove that $\rho_{P}$ maps
  facets of $X(\mathcal{C})$ to other facets. This holds for positive facets,
  since these correspond to lines of $\mathcal{C}$ by (2). Since $\rho$ respects
  color classes, $\rho_{P}$ also respects the orientation of positive facets.
  Since each edge of $X(\mathcal{C})$ is contained in a positive facet,
  $\rho_{P}$ represents a translation in $W_{k}$. Hence $\rho_{P}$ also respects
  negative facets.
\end{proof}

\section[Colored $k$-configurations from planar difference sets]{Construction of colored $k$-configurations from planar difference sets}
\label{construction-planar}

In \cref{quotient}, we constructed simplicial complexes with specific properties
from colored $k$-configurations. In this section, we give a construction of
colored $k$-configurations from planar difference sets. As a special case, we
will obtain our generalization of the 7-vertex triangulation of $T^{2}$
(\cref{pg-complex}).

We begin by defining planar difference sets (see~\cite{beth1999design}):

\begin{definition}
\label{planar}
  Let $G$ be an abelian group. A \emph{planar difference set} in $G$ is a subset
  $A \subseteq G$, such that for each $g \in G$ other than the identity, there
  exist unique $a_{1}, a_{2} \in A$ with $g = a_{1} - a_{2}$. The \emph{order}
  of $A$ is $|A| - 1$.
\end{definition}
Note that $|G| = (k + 1) \cdot k + 1 = k^{2} + k + 1$. In fact, a planar
difference set forms a projective plane, in the following way:

\begin{lemma}[Singer~\cite{singer1938theorem}]
\label{planar-projective}
  Let $G$ be an abelian group, and let $A$ be a planar difference set in $G$
  with $|A| = k + 1$. Then we obtain a projective plane of order $k$, where:
  \begin{itemize}
    \item
    The point set $P$ is the set of elements of $G$.

    \item
    The line set $L$ is also the set of elements of $G$.

    \item
    A point $p \in P$ and line $l \in L$ are incident if $p - l \in A$.

  \end{itemize}
\end{lemma}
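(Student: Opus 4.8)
I want to verify the three defining conditions of a projective plane for the incidence structure $(P, L, R)$ built from a planar difference set $A \subseteq G$ with $|A| = k+1$. Recall that $P = L = G$, and $p \in P$ is incident with $l \in L$ iff $p - l \in A$. The core of the argument is a translation between the "difference set" condition on $A$ and the "unique line through two points" condition; this is a classical computation, and I expect no serious obstacle.

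Here is how I would organize it. First, for condition (1): given distinct points $p_1, p_2 \in G$, a line $l$ passes through both iff $p_1 - l \in A$ and $p_2 - l \in A$, i.e., iff we can write $p_1 - l = a_1$ and $p_2 - l = a_2$ with $a_1, a_2 \in A$. Subtracting, $p_1 - p_2 = a_1 - a_2$, and since $p_1 \ne p_2$, the element $p_1 - p_2$ is not the identity, so by the planar difference set property there exist \emph{unique} $a_1, a_2 \in A$ with $p_1 - p_2 = a_1 - a_2$. Then $l = p_1 - a_1$ is forced and is the unique common line. So condition (1) holds.

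Next, condition (2): given distinct lines $l_1, l_2 \in G$, a point $p$ lies on both iff $p - l_1 \in A$ and $p - l_2 \in A$; writing $p - l_1 = a_1$, $p - l_2 = a_2$ gives $l_2 - l_1 = a_1 - a_2$, and again since $l_1 \ne l_2$, uniqueness of the representation of $l_2 - l_1$ as a difference of two elements of $A$ pins down $a_1, a_2$, hence $p = l_1 + a_1$. So condition (2) holds. For condition (3), the non-degeneracy axiom, I need four points, no three of which are "collinear in pairs" in the forbidden way — concretely, four points $p_1, p_2, p_3, p_4$ such that each line contains at most two of them. Since every line has exactly $k+1$ points and there are $k^2 + k + 1 \ge 3$ points when $k \ge 1$ (note $|A| = k+1 \ge 2$ forces $k \ge 1$; one should check the degenerate small cases, but the statement presumably intends $k \ge 2$, matching the remark that a $(q+1)$-configuration with $q \ge 2$ satisfying (1),(2) is a projective plane), a standard counting argument produces such a quadruple: pick any three non-collinear points (possible since a single line omits at least one point, and iterating), then pick a fourth point off all the lines spanned by pairs from the first three (there are only $\binom{3}{2} = 3$ such lines, covering at most $3(k+1) - 3$ points, which is less than $k^2 + k + 1$ for $k \ge 2$). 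Alternatively, and more cleanly, I would invoke the fact already recorded in the excerpt: any $(q+1)$-configuration with $q \ge 2$ satisfying conditions (1) and (2) is automatically a projective plane by Hall's characterization of degenerate planes; so once (1) and (2) are verified and one checks regularity (each point is on exactly $k+1$ lines: given $p$, the lines through $p$ are $\{p - a : a \in A\}$, which are $k+1$ distinct lines since $A$ has $k+1$ elements; dually for lines), condition (3) comes for free.

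**Main obstacle.** There isn't a hard step; the only thing requiring care is bookkeeping the direction of the differences ($p - l$ versus $l - p$) consistently, and handling the trivial edge cases in $k$. The "meat" — uniqueness of the common line/point — is a direct unwrapping of \cref{planar} combined with the observation that $p_1 \ne p_2$ makes $p_1 - p_2$ a \emph{non-identity} element, which is exactly where the planar difference set hypothesis applies. I would write conditions (1) and (2) in full (a few lines each) and then either spell out the short counting argument for (3) or cite Hall's theorem as the paper does elsewhere.
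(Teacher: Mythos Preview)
Your argument is correct and matches the paper's approach: the paper proves condition~(1) by exactly the subtraction trick you describe, then disposes of condition~(2) with ``the dual statement holds similarly,'' and in fact does not address condition~(3) at all. Your treatment is strictly more thorough than the paper's on that last point.
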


\begin{proof}
  Fix distinct points $p_{1}, p_{2} \in G$. Then a line $l \in G$ is incident
  with both of $p_{1}, p_{2}$ if and only if $p_{1} - l = a_{1}$ and $p_{2} - l
  = a_{2}$ for $a_{1}, a_{2} \in A$, implying $p_{1} - p_{2} = a_{1} - a_{2}$.
  Such $a_{1}, a_{2} \in A$ are uniquely determined by $p_{1}, p_{2}$, so any
  two points lie on exactly one common line. The dual statement holds similarly.
\end{proof}

For example, the projective planes $PG(2, \mathbb{F}_{q})$ all arise in this
way:
\begin{remark}[Singer~\cite{singer1938theorem}]
\label{pg-planar}
  Let $q$ be a prime power. Then the finite projective plane $PG(2,
  \mathbb{F}_{q})$ corresponds to a planar difference set in $\mathbb{Z}_{q^{2}
  + q + 1}$.
\end{remark}
However, it is not known whether all finite projective planes arising from
planar difference sets are isomorphic to $PG(2, \mathbb{F}_{q})$;
see~\cite{huang2009uniqueness} for a partial result in this direction. We now
give our construction.

\begin{theorem}
\label{planar-colored}
  Let $G$ be an abelian group, and let $A$ be a planar difference set in $G$
  with $|A| = k + 1$. The corresponding projective plane can be colored, giving
  a colored $(k + 1)$-configuration $\mathcal{C}$. Moreover, $G$ acts freely on
  $\mathcal{C}$.
\end{theorem}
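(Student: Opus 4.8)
The plan is to construct the coloring explicitly from the group structure, and then verify the 6-cycle property by a direct algebraic computation in $G$. Recall from \cref{planar-projective} that the projective plane has point set $P = G$, line set $L = G$, and incidence $p \sim l \iff p - l \in A$. Write $A = \{a_1, \ldots, a_{k+1}\}$, fixing once and for all an enumeration of $A$; I will use this enumeration to index the $k+1$ colors. The natural coloring is then: for an incidence $(p, l)$ with $p - l = a_c \in A$, set $\chi(p, l) = c$. First I would check this is a legitimate $(k+1)$-edge coloring of $G(\mathcal{C})$: if $(p, l)$ and $(p, l')$ are two incidences at the same point $p$ with the same color $c$, then $p - l = a_c = p - l'$, forcing $l = l'$; symmetrically at a line. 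So distinct edges sharing a vertex get distinct colors, and since each point (resp. line) has exactly $k+1$ incidences, all $k+1$ colors are used at each vertex.

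Next I would identify the maps $\phi_c$ from \cref{6-cycle} concretely. If $v = p$ is a point, then $\phi_c(p)$ is the unique line $l$ with $p - l = a_c$, i.e.\ $\phi_c(p) = p - a_c$ (viewed as an element of $L = G$). If $v = l$ is a line, then $\phi_c(l)$ is the unique point $p$ with $p - l = a_c$, i.e.\ $\phi_c(l) = l + a_c$. In both cases $\phi_c$ is the bijection ``add or subtract $a_c$, switching the point/line role.'' The key observation is that composing two of these maps, $\phi_b \circ \phi_a$, sends a point $p$ to the point $(p - a_a) + a_b = p + (a_b - a_a)$, and sends a line to $l + (a_a - a_b)$; in particular $\phi_b \circ \phi_a$ acts on points as translation by $a_b - a_a$ and on lines as translation by $a_a - a_b$. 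Now for distinct colors $a, b, c$ and a point $p$, I compute
\[
  (\phi_c \circ \phi_b \circ \phi_a \circ \phi_c \circ \phi_b \circ \phi_a)(p).
\]
Grouping as $(\phi_c \circ \phi_b) \circ (\phi_a \circ \phi_c) \circ (\phi_b \circ \phi_a)$ — each pair returns to a point — this is translation of $p$ by $(a_c - a_b) + (a_a - a_c) + (a_b - a_a) = 0$, so we get $p$ back. The same telescoping works starting from a line. This verifies the 6-cycle property, so $(\mathcal{C}, \chi)$ is a colored $(k+1)$-configuration. (Connectedness of $G(\mathcal{C})$: by \cref{connected} it suffices to connect any two points $p, p'$; since $H$, wait — since the differences $a_b - a_a$ generate $G$, any two points are joined by a path, as $G$ is generated by such differences, each of which is realized by a length-2 path through a line.)

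Finally, for the free action of $G$ on $\mathcal{C}$: let $G$ act on itself by translation on both $P = G$ and $L = G$, i.e.\ $\rho_P(g)(p) = p + g$ and $\rho_L(g)(l) = l + g$, with $\rho_\chi(g) = \mathrm{id}$ on colors. This preserves incidence since $(p + g) - (l + g) = p - l$, and it preserves colors trivially. The action $\rho_P$ is the regular action of $G$ on itself, which is free. So $\rho$ is a free action on $\mathcal{C}$ in the sense of the definition given before \cref{colored}.

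I expect no serious obstacle here; the only thing requiring a little care is the bookkeeping of which role ($P$ vs.\ $L$) a vertex currently plays when composing the $\phi_c$'s, and making sure the sign conventions in $\phi_c(p) = p - a_c$ versus $\phi_c(l) = l + a_c$ are tracked correctly through the six-fold composition — but the telescoping of the translation amounts makes the cancellation transparent once the conventions are pinned down.
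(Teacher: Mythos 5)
Your proposal is correct and takes essentially the same approach as the paper: the same coloring $\chi(p,l) = p - l$ (you merely index $A$ by $[k+1]$ where the paper uses $A$ itself as the color set), the same 6-cycle verification (your telescoping of translation amounts is precisely the paper's alternating sum of the six incidence equations), the same connectedness argument via \cref{connected}, and the same translation action for freeness. The only cosmetic additions are that you spell out why $\chi$ is a proper edge coloring and give explicit formulas for the maps $\phi_c$, which the paper leaves implicit.
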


\begin{proof}
  Let $\mathcal{C}$ be the projective plane described in
  \cref{planar-projective}. Then $\mathcal{C}$ is connected by \cref{connected},
  since any two points lie on a common line.

  We define a coloring $\chi \colon R(\mathcal{C}) \to A$ by $\chi(p, l) = p -
  l$; note that $p - l \in A$ since $p, l$ are incident in $\mathcal{C}$. To
  check the 6-cycle property, let $a, b, c\in A$, and consider the path $p_{0},
  l_{0}, p_{1}, l_{1}, p_{2}, l_{2}, p_{3}$ in $G(\mathcal{C})$ with
  \begin{align*}
    p_{0} - l_{0} &= a\\
    p_{1} - l_{0} &= b\\
    p_{1} - l_{1} &= c\\
    p_{2} - l_{1} &= a\\
    p_{2} - l_{2} &= b\\
    p_{3} - l_{2} &= c
  \end{align*}
  Taking an alternating sum gives $p_{0} - p_{3} = 0$, so the 6-cycle property
  holds.

  The free group action of $G$ on $\mathcal{C}$ is given by $g \cdot p = g + p$
  for points $p \in G$, and $g \cdot l = g + l$ for lines $l \in G$; this
  preserves incidences and colors.
\end{proof}

\begin{corollary}
\label{pg-colored}
  Let $q$ be a prime power. Then $PG(2, \mathbb{F}_{q})$ can be colored, giving
  a colored $(q + 1)$-configuration $\mathcal{C}$ with $q^{2} + q + 1$ points
  and $q^{2} + q + 1$ lines. Moreover, $\mathbb{Z}_{q^{2} + q + 1}$ acts freely
  on $\mathcal{C}$.
\end{corollary}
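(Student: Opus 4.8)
The plan is to deduce \cref{pg-colored} directly from \cref{planar-colored} together with the facts already assembled in this section. The key observation is that \cref{planar-colored} is a general statement about planar difference sets, and we only need to instantiate it at the specific difference set that yields $PG(2, \mathbb{F}_{q})$.

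First I would invoke \cref{pg-planar} (Singer's theorem): for a prime power $q$, the finite projective plane $PG(2, \mathbb{F}_{q})$ arises from a planar difference set $A$ in the cyclic group $G = \mathbb{Z}_{q^{2} + q + 1}$. Since $PG(2, \mathbb{F}_{q})$ has order $q$, each line is incident with $q + 1$ points, so $|A| = q + 1$; equivalently, by the identity $|G| = k^{2} + k + 1$ noted after \cref{planar}, setting $k = q$ gives $|G| = q^{2} + q + 1$ as required. Next I would apply \cref{planar-colored} to this pair $(G, A)$: it tells us that the projective plane corresponding to $A$ — which is $PG(2, \mathbb{F}_{q})$ — can be colored to form a colored $(q+1)$-configuration $\mathcal{C}$, and moreover that $G = \mathbb{Z}_{q^{2} + q + 1}$ acts freely on $\mathcal{C}$. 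Finally, the point and line counts ($q^{2} + q + 1$ each) follow either from the standard count for a projective plane of order $q$ or from $|P(\mathcal{C})| = |L(\mathcal{C})| = |G|$ in \cref{planar-projective}.

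There is essentially no obstacle here; the corollary is a straightforward specialization. The only point requiring a word of care is the identification of the projective plane produced by \cref{planar-projective} from Singer's difference set with $PG(2, \mathbb{F}_{q})$ itself — but this identification is exactly the content of \cref{pg-planar}, so nothing new is needed. I would write the proof in two or three sentences, citing \cref{pg-planar} for the existence of the difference set and \cref{planar-colored} for the coloring and the free action.

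\begin{proof}
  By \cref{pg-planar}, $PG(2, \mathbb{F}_{q})$ corresponds to a planar
  difference set $A$ in $\mathbb{Z}_{q^{2} + q + 1}$. Since $PG(2,
  \mathbb{F}_{q})$ has order $q$, each line contains $q + 1$ points, so $|A| = q
  + 1$. Applying \cref{planar-colored} with $G = \mathbb{Z}_{q^{2} + q + 1}$ and
  $k = q$, the projective plane corresponding to $A$, namely $PG(2,
  \mathbb{F}_{q})$, can be colored to give a colored $(q + 1)$-configuration
  $\mathcal{C}$ on which $\mathbb{Z}_{q^{2} + q + 1}$ acts freely. Finally,
  $\mathcal{C}$ has $q^{2} + q + 1$ points and $q^{2} + q + 1$ lines, since a
  finite projective plane of order $q$ has $q^{2} + q + 1$ points and lines.
\end{proof}
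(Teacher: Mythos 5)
Your proof is correct and takes exactly the same route as the paper, which simply cites \cref{pg-planar} and \cref{planar-colored}; you have merely spelled out the instantiation ($G = \mathbb{Z}_{q^{2}+q+1}$, $|A| = q+1$) and the point/line count in more detail. Nothing to change.
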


\begin{proof}
  By \cref{pg-planar} and \cref{planar-colored}.
\end{proof}

As a corollary, we get the following promised result:

\pgcomplex

\begin{proof}
  By \cref{pg-colored} and \cref{quotient}, since $PG(2, \mathbb{F}_{q})$ is
  self-dual.
\end{proof}

If we express $PG(2, \mathbb{F}_{q})$ as a planar difference set, the duality
between positive and negative facets becomes clear. For example, $PG(2,
\mathbb{F}_{2})$ corresponds to the planar difference set $\{0, 1, 3\}$ in
$\mathbb{Z}/7\mathbb{Z}$. The positive facets are translates of $\{0, 1, 3\}$,
and the negative facets are translates of $\{0, -1, -3\}$ (see \cref{fig:torus2}).

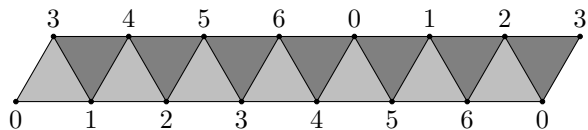
\begin{figure}
\begin{center}
\begin{tikzpicture}
  \path[fill = gray!50] (0, 0) -- (1, 0) -- (0.5, 0.866) -- (0, 0);
  \path[fill = gray!50] (1, 0) -- (2, 0) -- (1.5, 0.866) -- (1, 0);
  \path[fill = gray!50] (2, 0) -- (3, 0) -- (2.5, 0.866) -- (2, 0);
  \path[fill = gray!50] (3, 0) -- (4, 0) -- (3.5, 0.866) -- (3, 0);
  \path[fill = gray!50] (4, 0) -- (5, 0) -- (4.5, 0.866) -- (4, 0);
  \path[fill = gray!50] (5, 0) -- (6, 0) -- (5.5, 0.866) -- (5, 0);
  \path[fill = gray!50] (6, 0) -- (7, 0) -- (6.5, 0.866) -- (6, 0);

  \path[fill = gray] (0.5, 0.866) -- (1, 0) -- (1.5, 0.866) -- (0.5, 0.866);
  \path[fill = gray] (1.5, 0.866) -- (2, 0) -- (2.5, 0.866) -- (1.5, 0.866);
  \path[fill = gray] (2.5, 0.866) -- (3, 0) -- (3.5, 0.866) -- (2.5, 0.866);
  \path[fill = gray] (3.5, 0.866) -- (4, 0) -- (4.5, 0.866) -- (3.5, 0.866);
  \path[fill = gray] (4.5, 0.866) -- (5, 0) -- (5.5, 0.866) -- (4.5, 0.866);
  \path[fill = gray] (5.5, 0.866) -- (6, 0) -- (6.5, 0.866) -- (5.5, 0.866);
  \path[fill = gray] (6.5, 0.866) -- (7, 0) -- (7.5, 0.866) -- (6.5, 0.866);

  \draw[fill] (0, 0) circle [radius = 0.03];
  \draw[fill] (1, 0) circle [radius = 0.03];
  \draw[fill] (2, 0) circle [radius = 0.03];
  \draw[fill] (3, 0) circle [radius = 0.03];
  \draw[fill] (4, 0) circle [radius = 0.03];
  \draw[fill] (5, 0) circle [radius = 0.03];
  \draw[fill] (6, 0) circle [radius = 0.03];
  \draw[fill] (7, 0) circle [radius = 0.03];

  \draw[fill] (0.5, 0.866) circle [radius = 0.03];
  \draw[fill] (1.5, 0.866) circle [radius = 0.03];
  \draw[fill] (2.5, 0.866) circle [radius = 0.03];
  \draw[fill] (3.5, 0.866) circle [radius = 0.03];
  \draw[fill] (4.5, 0.866) circle [radius = 0.03];
  \draw[fill] (5.5, 0.866) circle [radius = 0.03];
  \draw[fill] (6.5, 0.866) circle [radius = 0.03];
  \draw[fill] (7.5, 0.866) circle [radius = 0.03];

  \draw (0, 0) -- (1, 0) -- (2, 0) -- (3, 0)
    -- (4, 0) -- (5, 0) -- (6, 0) -- (7, 0);
  \draw (0.5, 0.866) -- (1.5, 0.866) -- (2.5, 0.866) -- (3.5, 0.866)
    -- (4.5, 0.866) -- (5.5, 0.866) -- (6.5, 0.866) -- (7.5, 0.866);
  \draw (0, 0) -- (0.5, 0.866) -- (1, 0) -- (1.5, 0.866)
    -- (2, 0) -- (2.5, 0.866) -- (3, 0) -- (3.5, 0.866)
    -- (4, 0) -- (4.5, 0.866) -- (5, 0) -- (5.5, 0.866)
    -- (6, 0) -- (6.5, 0.866) -- (7, 0) -- (7.5, 0.866);

  \node [below] at (0, 0) {$0$};
  \node [below] at (1, 0) {$1$};
  \node [below] at (2, 0) {$2$};
  \node [below] at (3, 0) {$3$};
  \node [below] at (4, 0) {$4$};
  \node [below] at (5, 0) {$5$};
  \node [below] at (6, 0) {$6$};
  \node [below] at (7, 0) {$0$};

  \node [above] at (0.5, 0.866) {$3$};
  \node [above] at (1.5, 0.866) {$4$};
  \node [above] at (2.5, 0.866) {$5$};
  \node [above] at (3.5, 0.866) {$6$};
  \node [above] at (4.5, 0.866) {$0$};
  \node [above] at (5.5, 0.866) {$1$};
  \node [above] at (6.5, 0.866) {$2$};
  \node [above] at (7.5, 0.866) {$3$};
\end{tikzpicture}
\caption{The 7-vertex triangulation of $T^{2}$.}
\label{fig:torus2}
\end{center}
\end{figure}

\section[Colored $k$-configurations from commutative semifields]{Construction of colored $k$-configurations from commutative semifields}
\label{construction-semifields}

In this section, we give a construction of colored $k$-configurations from
commutative semifields, giving another possible input to our simplicial complex
construction (\cref{quotient}). A semifield has the properties of a field,
except that multiplication is not required to be associative or commutative:

\begin{definition}[\cite{knuth1963finite}; see~\cite{weibel2007survey}]
  A \emph{semifield} consists of a set $S$ and operations $(+), (\cdot)$, such
  that:
  \begin{itemize}
    \item
    $S$ forms a group under addition; we call the additive identity 0.

    \item
    If $a, b \in S$, $a \ne 0$, then there exists unique $x \in S$ with $a \cdot
    x = b$.

    \item
    If $a, b \in S$, $a \ne 0$, then there exists unique $y \in S$ with $y \cdot
    a = b$.

    \item
    If $a, b, c \in S$, then $a \cdot (b + c) = ab + ac$, and $(a + b) \cdot c =
    ac + bc$.

    \item
    There exists $1 \in S$ with $1 \ne 0$, and $1 \cdot a = a \cdot 1 = a$ for
    all $a \in S$.

  \end{itemize}
\end{definition}
This definition is important in the classification of projective planes. Any
projective plane admits a coordinatization with coordinates in a \emph{ternary
ring}; conversely, any ternary ring gives rise to a projective plane. If the
ternary ring is also a semifield, the corresponding projective plane has certain
symmetries. Moreover, if the ternary ring is also a field, the corresponding
projective plane is $PG(2, \mathbb{F}_{q})$. (See~\cite{weibel2007survey} for
definitions and details.)

Like fields, the number of elements in a semifield is always a prime power
(see~\cite{knuth1963finite}, Section~2.5). But there are commutative semifields
that are not fields; see~\cite{weibel2007survey} for examples. We outline the
construction of projective planes from semifields; this is a special case of the
general construction for ternary rings:

\begin{theorem}[Hall~\cite{hall1943projective}]
\label{semifield-projective}
  Let $F$ be a semifield with $|F| = q$. Then there exists a finite projective
  plane of order $q$.
\end{theorem}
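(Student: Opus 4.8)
The plan is to construct the projective plane explicitly by coordinatizing the affine plane over $F$ and then adding a line at infinity, mimicking the classical construction of $PG(2, \mathbb{F}_{q})$ but using only the semifield axioms. First I would define the affine plane $A(F)$: take the point set to be $F \times F$, and take two families of lines. Vertical lines are sets $\{(c, y) : y \in F\}$ for each $c \in F$; non-vertical lines are sets $\{(x, m \cdot x + b) : x \in F\}$ for each slope $m \in F$ and intercept $b \in F$. Here the expression $m \cdot x + b$ makes sense using only the semifield operations. To get the projective plane, I would adjoin a set of $q + 1$ points at infinity: one point $[m]$ for each slope $m \in F$, which lies on every non-vertical line of slope $m$; one point $[\infty]$ lying on every vertical line; and a single new line $L_\infty$ consisting of exactly the points $\{[m] : m \in F\} \cup \{[\infty]\}$.

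The key steps in verifying the projective plane axioms are as follows. For axiom (1), given two distinct points $p_1, p_2$, I would check case by case (both affine, one affine and one at infinity, both at infinity) that there is a unique line through both. The only nontrivial case is two distinct affine points $(x_1, y_1), (x_2, y_2)$ with $x_1 \ne x_2$: I need a unique $(m, b)$ with $m \cdot x_1 + b = y_1$ and $m \cdot x_2 + b = y_2$. Subtracting, using the left distributive law $m \cdot (x_1 - x_2) = m \cdot x_1 - m \cdot x_2 = y_1 - y_2$; since $x_1 - x_2 \ne 0$, the left-division axiom gives a unique such $m$, and then $b = y_1 - m \cdot x_1$ is forced. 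For axiom (2), given two distinct lines I must find a unique common point; the interesting case is two non-vertical lines with slopes $m_1 \ne m_2$, where I need a unique $x$ with $m_1 \cdot x + b_1 = m_2 \cdot x + b_2$, i.e. $(m_1 - m_2) \cdot x = b_2 - b_1$ (using the right distributive law, here $(m_1 - m_2) \cdot x = m_1 \cdot x - m_2 \cdot x$), solvable uniquely by the right-division axiom since $m_1 - m_2 \ne 0$. Axiom (3) is handled by exhibiting four points in general position, e.g. $(0,0), (1,0), (0,1), (1,1)$, and checking no line contains three of them — this uses $1 \ne 0$ and a short computation. Finally, a counting argument (each non-vertical line has $q$ affine points plus one point at infinity) confirms the order is $q$.

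The main obstacle is being careful about which distributive law is available at each step and never accidentally using commutativity or associativity of multiplication. In particular, the argument for two affine points needs $m$ appearing on the left of the products $m \cdot x_i$, so I rely on left distributivity and left divisibility; the argument for two non-vertical lines needs the slopes on the left again, but now the cancellation is $(m_1 - m_2) \cdot x$, so I rely on right distributivity and right divisibility. Since the semifield definition supplies both distributive laws and both one-sided division properties, every required step goes through, but the bookkeeping is where an error could creep in. I would also double-check the degenerate subcases (vertical versus vertical, vertical versus non-vertical, affine versus infinite) are each either immediate or reduce to the additive group structure of $F$. This is essentially the standard coordinatization argument specialized to semifields, and since the paper only needs the existence statement, I would not dwell on uniqueness of the plane or its symmetry properties.
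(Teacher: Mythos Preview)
Your proposal is correct and follows essentially the same approach as the paper: the paper constructs exactly this projective plane (affine points $(x,y)$, lines $y = ax + b$ and $x = a$, plus a line at infinity with its points $(x)$ for $x \in F \sqcup \{\infty\}$), and then simply states ``We omit the proofs of the projective plane properties.'' Your sketch of those verifications, with the careful tracking of which distributive and division axioms are invoked at each step, is more detailed than what the paper supplies and is accurate.
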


\begin{proof}
  The point set $P$ consists of:
  \begin{itemize}
    \item
    Points $(x, y)$ for $x, y \in F$.

    \item
    Points $(x)$ for $x \in F \sqcup \{\infty\}$; this is a ``point at
    infinity'' for slope $x$.

  \end{itemize}
  The line set $L$ consists of:
  \begin{itemize}
    \item
    Lines $[a, b]$ for $a, b \in F$; this is the ``line'' $y = ax + b$.

    \item
    Lines $[a]$ for $a \in F \sqcup \{\infty\}$; this is the ``line'' $x = a$.

  \end{itemize}
  The set of incidence relations $R$ consists of:
  \begin{itemize}
    \item
    Incidences $((x, y), [a, b])$ for $x, y, a, b \in F$ with $y = ax + b$.

    \item
    Incidences $((x, y), [x])$ for $x, y \in F$.

    \item
    Incidences $((a), [a, b])$ for $a, b \in F$.

    \item
    Incidences $((x), [\infty])$ for $x \in F$.

    \item
    Incidences $((\infty), [a])$ for $a \in F$.

    \item
    The incidence $((\infty), [\infty])$.

  \end{itemize}
  We omit the proofs of the projective plane properties.
\end{proof}

We now give our construction.

\begin{theorem}
\label{semifield-colored}
  Let $F$ be a commutative semifield with $|F| = q$. Then there exists a colored
  $q$-configuration with $q^{2}$ points and $q^{2}$ lines.
\end{theorem}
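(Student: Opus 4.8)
The plan is to take the projective plane produced by \cref{semifield-projective} (using the commutative semifield $F$), delete the line $[\infty]$ and all its points $(x)$ for $x \in F \sqcup \{\infty\}$, and show that what remains is a colored $q$-configuration. After deleting the line at infinity, the surviving points are the $q^2$ affine points $(x,y)$ with $x,y \in F$, and the surviving lines are the $q^2$ affine lines $[a,b]$ with $a,b \in F$ (since $[x]$ loses its point at infinity $(\infty)$, so it now has only $q$ points — wait, $[x]$ had the $q$ points $(x,y)$ plus the point $(\infty)$, so after deleting $(\infty)$ it has $q$ points; similarly $[\infty]$ is entirely gone, and $[a,b]$ had the point $(a)$ deleted, leaving $q$ points). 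Each affine point $(x,y)$ was on the $q+1$ lines $[a, y-ax]$ for $a \in F$ together with $[x]$, so it is on $q+1 \ge q$ surviving lines — too many. So I should instead delete one more line's worth: delete $[\infty]$ together with all the lines $[x]$ for $x \in F$, i.e. delete a whole "pencil". Concretely: take as points the $q^2$ affine points $(x,y)$, and as lines the $q^2$ lines $[a,b]$, $a,b \in F$. Then each point $(x,y)$ is on exactly the lines $[a, y - ax]$, $a \in F$, which is $q$ lines; and each line $[a,b]$ contains exactly the points $(x, ax+b)$, $x \in F$, which is $q$ points. The no-4-cycle condition is inherited from the projective plane. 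So this is a $q$-configuration with $q^2$ points and $q^2$ lines; connectedness follows from \cref{connected} since any two affine points lie on a common affine line (through $(x_1,y_1), (x_2,y_2)$ with $x_1 \ne x_2$ take slope $a = (y_2-y_1)(x_2-x_1)^{-1}$; if $x_1 = x_2$ use a two-step path). This uses commutativity nowhere yet.

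The coloring is where commutativity enters. The incidence $((x,y),[a,b])$ with $y = ax+b$ should be colored by the slope $a \in F$; since $F$ itself serves as the color set $[q]$. I claim this is a proper $q$-edge coloring: two incidences at the same point $(x,y)$ correspond to distinct lines $[a, y-ax]$, hence distinct slopes $a$; two incidences on the same line $[a,b]$ all have the same slope $a$ but involve distinct points, so — wait, that's wrong, all $q$ incidences on $[a,b]$ get color $a$. That violates properness on the line side. So the color of $((x,y),[a,b])$ cannot just be $a$. Instead I should color it by something that, restricted to a fixed line $[a,b]$, runs over all of $F$ as $x$ varies, and restricted to a fixed point runs over all of $F$ as $a$ varies. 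The natural choice is to color $((x,y),[a,b])$ by the point's $x$-coordinate — no: on a fixed line $[a,b]$ the points are $(x, ax+b)$ so $x$ runs over $F$ (good), but at a fixed point $(x,y)$, $x$ is constant (bad). So a single coordinate won't do. One option that might work: color by $a$ on "one half" — but edge colorings don't have halves. Let me reconsider: I want $\chi((x,y),[a,b])$ such that the map $a \mapsto \chi$ is a bijection at each point, and the map $x \mapsto \chi$ is a bijection along each line. Try $\chi((x,y),[a,b]) = x + a$ (using the additive group of $F$): at a fixed point, $x$ fixed, $a \mapsto x+a$ is a bijection on $F$; along a fixed line $[a,b]$, $a$ fixed, $x \mapsto x+a$ is a bijection on $F$. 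So $\chi((x,y),[a,b]) = x + a \in F$ is a proper $q$-edge coloring.

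Now the 6-cycle property. Starting from a point $p_0 = (x_0, y_0)$, following edges of colors $c_1, c_2, c_3, c_1, c_2, c_3$ alternately through points and lines, I need to return to $p_0$. Let me set up the transitions. From point $(x,y)$, the edge of color $c$ goes to the line with slope $a$ where $x + a = c$, i.e. $a = c - x$, and that line is $[c-x, \, y-(c-x)x]$. From line $[a,b]$, the edge of color $c$ goes to the point $(x, ax+b)$ where $x + a = c$, i.e. $x = c - a$; so it reaches the point $(c-a, \, a(c-a)+b)$. Composing: from $(x,y)$, color $c$ to the line, then color $c'$ back to a point: the line is $[c-x, y-(c-x)x]$ with $a = c-x$, $b = y - (c-x)x$; then color $c'$ gives $x' = c' - a = c' - c + x$ and $y' = a x' + b = (c-x)(c'-c+x) + y - (c-x)x = (c-x)(c'-c) + y$. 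So the two-step map $\phi_{c'}\circ\phi_c$ sends $(x,y) \mapsto (x + (c'-c), \; y + (c-x)(c'-c))$. Applying this three times with color pairs $(c_1,c_2)$, $(c_3,c_1)$, $(c_2,c_3)$: the $x$-coordinate changes by $(c_2-c_1) + (c_1-c_3) + (c_3-c_2) = 0$, so $x$ returns. The $y$-coordinate changes by a sum of three terms $(c_i - x_{\text{current}})(c_j - c_i)$; expanding each as $c_i(c_j-c_i) - x(c_j - c_i)$ and summing, the $c_i(c_j-c_i)$ part telescopes/cancels by a direct computation (here I use commutativity and distributivity of $F$, expanding $c_1c_2 - c_1^2 + c_3c_1 - c_3^2 + c_2c_3 - c_2^2$ and matching against the $-x(\cdots)$ terms which also sum with the running $x$-shifts). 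This final algebraic cancellation — verifying $\Delta y = 0$ using only the semifield axioms plus commutativity — is the one genuinely computational step, and it is the place where commutativity is essential (a non-commutative semifield would produce terms like $c_1 c_2 - c_2 c_1$ that need not vanish). I expect this computation to be short once the two-step map above is in hand; that, rather than the configuration bookkeeping, is the technical heart of the proof. Once $\Delta x = 0$ and $\Delta y = 0$ are checked, \cref{6-cycle}'s condition holds at every point, and the dual check at lines follows by the same computation run from a line instead of a point (or by noting the $6$-cycle condition is symmetric in the roles of points and lines). Assembling these pieces gives the colored $q$-configuration with $q^2$ points and $q^2$ lines.
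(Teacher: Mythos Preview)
Your proposal is correct and follows essentially the same route as the paper: delete the points and lines at infinity from the semifield plane to obtain the $q$-configuration on affine points $(x,y)$ and affine lines $[a,b]$, color the incidence $((x,y),[a,b])$ by $x+a$, and verify the 6-cycle property by an algebraic cancellation that uses commutativity exactly once. The only cosmetic difference is that the paper organizes the 6-cycle check as an alternating sum of the six coloring equations (yielding $x_0=x_3$ immediately, then computing $y_3-y_0$ directly), whereas you first derive the closed-form two-step map $(x,y)\mapsto(x+(c'-c),\,y+(c-x)(c'-c))$ and iterate it three times; these are the same computation packaged differently.
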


\begin{proof}
  Starting with the finite projective plane from
  \cref{semifield-projective}, delete all points and lines other than
  those of the form $(x, y), [a, b]$. This deletes a line through each remaining
  point and a point on each remaining line; hence the result is a
  $q$-configuration $\mathcal{C}$ with $q^{2}$ points and $q^{2}$ lines.

  To show that $\mathcal{C}$ is connected, it suffices by \cref{connected} to
  show that there exists a path in $G(\mathcal{C})$ between any two points
  $(x_{1}, y_{1}), (x_{2}, y_{2})$. If $x_{1} \ne x_{2}$, then let $a \in F$
  satisfy $a \cdot (x_{2} - x_{1}) = y_{2} - y_{1}$, and let $b = y_{1} - a
  \cdot x_{1}$. Then $(x_{1}, y_{1})$ lies on $[a, b]$ by the definition of $b$,
  and $(x_{2}, y_{2})$ lies on $[a, b]$ by:
  \begin{align*}
    a \cdot x_{2} + b
      &= a \cdot x_{2} + (y_{1} - a \cdot x_{1})\\
      &= a \cdot (x_{2} - x_{1}) + y_{1}\\
      &= (y_{2} - y_{1}) + y_{1}\\
      &= y_{2}.
  \end{align*}
  Hence $(x_{1}, y_{1}), (x_{2}, y_{2})$ are adjacent if $x_{1} \ne x_{2}$.
  Since $F$ has at least two distinct elements $0, 1$, there exists a path in
  $G(\mathcal{C})$ between any two points of $\mathcal{C}$.

  Define a coloring $\chi \colon R(\mathcal{C}) \to F$ by $\chi((x, y), [a, b])
  = x + a$. In an incidence $((x, y), [a, b])$, each of $y, b$ are uniquely
  determined by the other three variables, which implies that $\chi$ is a
  $q$-edge coloring of $G(\mathcal{C})$. To check the 6-cycle property, let $c,
  d, e \in F$, and consider the path $(x_{0}, y_{0})$, $[a_{1}, b_{1}]$,
  $(x_{1}, y_{1})$, $[a_{2}, b_{2}]$, $(x_{2}, y_{2})$, $[a_{3}, b_{3}]$,
  $(x_{3}, y_{3})$ in $G(\mathcal{C})$ with
  \begin{align*}
    x_{0} + a_{1} &= c\\
    x_{1} + a_{1} &= d\\
    x_{1} + a_{2} &= e\\
    x_{2} + a_{2} &= c\\
    x_{2} + a_{3} &= d\\
    x_{3} + a_{3} &= e
  \end{align*}
  Taking an alternating sum gives $x_{0} - x_{3} = 0$; it remains to show $y_{0}
  = y_{3}$. Note that $a_{1} = c - x_{0}$, $a_{2} = c - d + e - x_{0}$, and
  $a_{3} = e - x_{0}$. We have
  \begin{align*}
    y_{3} - y_{0}
      &= (y_{3} - y_{2}) + (y_{2} - y_{1}) + (y_{1} - y_{0})\\
      &= a_{3} \cdot (x_{3} - x_{2}) + a_{2} \cdot (x_{2} - x_{1}) + a_{1} \cdot (x_{1} - x_{0})\\
      &= (e - x_{0}) \cdot (e - d) + (c - d + e - x_{0}) \cdot (c - e) + (c - x_{0}) \cdot (d - c)\\
      &= c \cdot d + d \cdot e + e \cdot c - d \cdot c - e \cdot d - c \cdot e\\
      &= 0.
  \end{align*}
  where we use commutativity in the last step. This completes the proof.
\end{proof}

In \cref{semifield-colored}, consider taking $F$ to be the field
$\mathbb{F}_{q}$. This gives a colored $q$-configuration $\mathcal{C}$; contrast
this with \cref{pg-colored}, which gives a colored $(q + 1)$-configuration
$\mathcal{C}'$. The underlying $q$-configuration of $\mathcal{C}$ can be
completed to that of $\mathcal{C}'$ by adding the points and lines at infinity,
but the coloring of $\mathcal{C}$ cannot be extended to $\mathcal{C}'$; in this
sense, the two constructions are distinct.

\section{Relationships with Sidon sets \& linear codes}
\label{relationship}

We now explain the connection between Sidon sets, linear codes, and colored
$k$-configurations. To begin, we define a Sidon set:

\begin{definition}[\cite{sidon1932satz}; see~\cite{kovacevic2019sidon}]
  Let $G$ be an abelian group, written additively. A set $B = \{b_{0}, b_{1},
  \ldots , b_{k}\} \subseteq G$ is a \emph{Sidon set of order $h$} if the sums
  $b_{i_{1}} + \cdots + b_{i_{h}}$, $0 \le i_{1} \le \cdots \le i_{h} \le k$,
  are all different.
\end{definition}

For $h = 2$, this condition is equivalent to the condition that the differences
$b_{i} - b_{j}$ for $i \ne j$ are all distinct, so $|G| \ge k^{2} + k + 1$.
Moreover:

\begin{remark}
\label{characterize-planar}
  Let $G$ be an abelian group with $|G| = k^{2} + k + 1$. Then a planar
  difference set in $G$ is exactly a Sidon set $B$ of order 2 with $|B| = k$.
\end{remark}

As with planar difference sets, Sidon sets may be translated; if $B$ is a Sidon
set of order $h$ in $G$, then so is $\{b + g : b \in B\}$ for any $g \in G$.

Next, we define a linear code:
\begin{definition}[see~\cite{kovacevic2019sidon}]
  A \emph{linear code} with radius $r$ in $A_{k}$ is a lattice $\mathcal{L}
  \subseteq A_{k}$, such that the balls $B_{r}(\vec{x}) = \{\vec{y} : d(\vec{x},
  \vec{y}) \le r\}$ for $\vec{x} \in \mathcal{L}$ are all disjoint. We say that
  $\mathcal{L}$ is \emph{perfect} if the balls $B_{r}(\vec{x})$ also cover
  $A_{k}$.
\end{definition}

There is a known correspondence between Sidon sets and linear codes; recall from
\cref{lattice} that $A_{n} \cong \mathbb{Z}^{n}$ on ignoring the last
coordinate:
\begin{theorem}[Kovačević~\cite{kovacevic2019sidon}]
\label{sidon-code}
  \quad
  \begin{enumerate}[(a)]
    \item
    Let $B = \{0, b_{1}, \ldots , b_{k}\}$ be a Sidon set of order 2 in an
    abelian group $G$, and suppose $B$ generates $G$. Then the lattice
    $$\mathcal{L} = \{\vec{x} \in A_{k} : \sum_{i = 1}^{k}x_{i}b_{i} = 0\}$$
    is a linear code with radius 1 in $A_{k}$, and $G \cong A_{k} /
    \mathcal{L}$. (Here $x_{i}b_{i}$ denotes the sum in $G$ of $|x_{i}|$ copies
    of $b_{i}$ if $x_{i} > 0$ and of $-b_{i}$ if $x_{i} < 0$.)

    \item
    Conversely, if $\mathcal{L}$ is a linear code with radius 1 in $A_{k}$, then
    the group $G = A_{k} / \mathcal{L}$ contains a Sidon set $B$ of order 2 with
    $|B| = k + 1$, such that $B$ generates $G$.
  \end{enumerate}
\end{theorem}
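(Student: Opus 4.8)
\textbf{Proof plan for \cref{sidon-code}.}

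The plan is to prove the two directions largely by unwinding definitions, with the key arithmetic fact being that a radius-1 ball in $A_{k}$ consists of the center together with the $k(k+1)$ points $\vec{x} \pm (\vec{e}_{i} - \vec{e}_{j})$, so two such balls intersect precisely when the difference of centers lies in the set $D = \{ \pm(\vec{e}_{i} - \vec{e}_{j}) : i \ne j\} \cup \{(\vec{e}_{i} - \vec{e}_{j}) - (\vec{e}_{i'} - \vec{e}_{j'})\}$ of ``small'' vectors. For part (a), I would first check that $\mathcal{L}$ as defined is a sublattice of $A_{k}$ (it is the kernel of the group homomorphism $A_{k} \to G$ sending $\vec{x} \mapsto \sum_i x_i b_i$, using $b_0 = 0$ so the last coordinate is invisible), and that this homomorphism is surjective since $B$ generates $G$, giving $A_{k}/\mathcal{L} \cong G$ immediately. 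Then I would show $\mathcal{L}$ has radius 1: if two balls $B_1(\vec{x}), B_1(\vec{y})$ with $\vec{x}, \vec{y} \in \mathcal{L}$ met, then $\vec{x} - \vec{y} \in \mathcal{L}$ would be a nonzero small vector, i.e. $\sum_i (x_i - y_i) b_i = 0$ with the coefficients coming from a difference of two balls; expanding, this would express $0$ as $b_a - b_b + b_c - b_d$ (indices in $\{0,\dots,k\}$, reading $b_0 = 0$) with not all of $a,b,c,d$ equal in the forced pattern, i.e. $b_a + b_c = b_b + b_d$ as an unordered-pair equality that the Sidon-order-2 property forbids unless $\{a,c\} = \{b,d\}$, which forces $\vec{x} = \vec{y}$. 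Care is needed to enumerate exactly which coefficient vectors arise from $D$ and to confirm each corresponds to a genuine relation among (at most) four of the $b_i$.

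For part (b), given a linear code $\mathcal{L}$ with radius 1, set $G = A_k/\mathcal{L}$ and let $b_i$ be the image of $\vec{e}_i - \vec{e}_{k+1}$ in $G$ for $i \in [k]$, together with $b_0 = 0$; these generate $G$ since the $\vec{e}_i - \vec{e}_{k+1}$ generate $A_k$. To see $B = \{b_0, \dots, b_k\}$ is Sidon of order 2, suppose $b_a + b_c = b_b + b_d$ in $G$; lifting, $(\vec{e}_a - \vec{e}_{k+1}) + (\vec{e}_c - \vec{e}_{k+1}) - (\vec{e}_b - \vec{e}_{k+1}) - (\vec{e}_d - \vec{e}_{k+1}) \in \mathcal{L}$ (with the convention $\vec{e}_0 - \vec{e}_{k+1} := \vec{0}$), and this vector is small — it lies in the difference of two radius-1 balls around $\vec{0}$ — so the radius-1 (packing) hypothesis forces it to be $\vec{0}$, whence $\{a,c\} = \{b,d\}$ as multisets. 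I should also note $|B| = k+1$, i.e. the $b_i$ are distinct, which follows the same way. The reduction of the order-2 Sidon condition to the four-term equality $b_{i_1} + b_{i_2} = b_{j_1} + b_{j_2}$ is the standard one stated just after the definition of Sidon set.

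The main obstacle I expect is purely bookkeeping: getting the correspondence between nonzero ``small'' lattice vectors and four-term relations $b_a + b_c = b_b + b_d$ exactly right, including the degenerate subcases where some indices coincide or equal $0$ (so that a term drops out), and checking that the Sidon property rules out each such relation except the trivial one. There is no deep difficulty — the simply-connectedness and 6-cycle machinery of the earlier sections play no role here — so once the ball structure of $A_k$ is pinned down, both directions are short. I would present the ball computation once as a preliminary observation and then invoke it in both (a) and (b).
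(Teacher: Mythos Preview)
The paper does not actually prove \cref{sidon-code}; it is quoted from Kova\v{c}evi\'{c}~\cite{kovacevic2019sidon} and used as a black box in the proof of \cref{correspondence}. Your proposal is a correct reconstruction of the standard argument: the map $A_{k}\to G$, $\vec{x}\mapsto\sum_{i=1}^{k}x_{i}b_{i}$, is a surjective homomorphism with kernel $\mathcal{L}$, and the radius-$1$ packing condition on $\mathcal{L}$ is exactly the statement that no nonzero $\vec{z}\in\mathcal{L}$ has $\|\vec{z}\|_{1}\le 4$, which unwinds to a forbidden four-term relation $b_{a}+b_{c}=b_{b}+b_{d}$ among elements of $B$ (reading $b_{0}=0$ for the $(k+1)$st coordinate). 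The only place to be careful is the case analysis for $\|\vec{z}\|_{1}=4$: besides $\vec{e}_{i}+\vec{e}_{j}-\vec{e}_{i'}-\vec{e}_{j'}$ with four distinct indices, you must also handle $2\vec{e}_{i}-2\vec{e}_{j}$, $2\vec{e}_{i}-\vec{e}_{j}-\vec{e}_{j'}$, and $\vec{e}_{i}+\vec{e}_{i'}-2\vec{e}_{j}$, each of which still reduces to a relation $b_{a}+b_{c}=b_{b}+b_{d}$ with $\{a,c\}\ne\{b,d\}$ as multisets. With that bookkeeping done, both directions go through exactly as you outline.
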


We also have a correspondence with colored $(k + 1)$-configurations:

\correspondence

\begin{proof}
  We first give a map $\eta$ from (2) to (3). Let $\mathcal{L}$ be a linear code
  with radius 1 in $A_{k}$, with $|A_{k} / \mathcal{L}| = n$. We define a
  colored $(k + 1)$-configuration $\mathcal{C}$:
  \begin{itemize}
    \item
    The point set $P$ is the set of elements of $A_{k} / \mathcal{L}$.

    \item
    The line set $L$ is also the set of elements of $A_{k} / \mathcal{L}$.

    \item
    A point $p \in P$ and line $l \in L$ are incident if $p - l = \vec{e}_{i} -
    \vec{e}_{k + 1}$ for some $i \in [k + 1]$; the color of the incidence is
    $i$.
  \end{itemize}
  Fix distinct points $p_{1}, p_{2} \in A_{k} / \mathcal{L}$. Then a line $l \in
  A_{k} / \mathcal{L}$ is incident with both of $p_{1}, p_{2}$ if and only if
  $p_{1} - l = \vec{e}_{i} - \vec{e}_{k + 1}$ and $p_{2} - l = \vec{e}_{j} -
  \vec{e}_{k + 1}$ for $i, j \in [k + 1]$, implying $p_{1} - p_{2} = \vec{e}_{i}
  - \vec{e}_{j}$, and $i \ne j$. Now the vectors $\vec{e}_{i} - \vec{e}_{j}$ for
  distinct $i, j \in [k + 1]$ have pairwise distance 1 or 2 in $A_{k}$, and
  hence are pairwise distinct in $A_{k} / \mathcal{L}$. Therefore, there is at
  most one choice of $i, j$ with $p_{1} - p_{2} = \vec{e}_{i} - \vec{e}_{j}$, so
  there is at most one $l$ incident with both of $p_{1}, p_{2}$. The dual
  statement holds similarly, so $\mathcal{C}$ is a $(k + 1)$-configuration.

  To show that $\mathcal{C}$ is connected, it suffices by \cref{connected} to
  show that there exists a path in $G(\mathcal{C})$ between any two points
  $p_{1}, p_{2} \in A_{k} / \mathcal{L}$. Suppose we have $p_{1} - p_{2} =
  \vec{e}_{i} - \vec{e}_{k + 1}$; then $p_{1}, p_{2}$ both lie on the line
  $p_{2}$. The general case follows by writing $p_{1} - p_{2}$ as an integer
  combination of vectors of the form $\vec{e}_{i} - \vec{e}_{k + 1}$.

  To show that $\mathcal{C}$ has the 6-cycle property, let $a, b, c \in [k +
  1]$, and consider the path $p_{0}, l_{0}, p_{1}, l_{1}, p_{2}, l_{2}, p_{3}$
  in $G(\mathcal{C})$ with
  \begin{align*}
    p_{0} - l_{0} &= \vec{e}_{a} - \vec{e}_{k + 1}\\
    p_{1} - l_{0} &= \vec{e}_{b} - \vec{e}_{k + 1}\\
    p_{1} - l_{1} &= \vec{e}_{c} - \vec{e}_{k + 1}\\
    p_{2} - l_{1} &= \vec{e}_{a} - \vec{e}_{k + 1}\\
    p_{2} - l_{2} &= \vec{e}_{b} - \vec{e}_{k + 1}\\
    p_{3} - l_{2} &= \vec{e}_{c} - \vec{e}_{k + 1}
  \end{align*}
  Taking an alternating sum gives $p_{0} - p_{3} = 0$, so the 6-cycle property
  holds.

  \bigskip

  \cref{sidon-code} gives a correspondence between (1) and (2), and
  \cref{construction} gives a map $\theta$ from (3) to (2), since if
  $\ell(\vec{0}) = p$, then by the properties of the lemma, $\ell^{-1}(p)$ is a
  linear code with radius 1. The construction in \cref{construction} involves a
  choice of $\vec{v}_{0} \in A_{k}$ and $p_{0} \in P(\mathcal{C})$, but distinct
  choices of $\vec{v}_{0}, p_{0}$ give the same linear code
  $\ell^{-1}(\ell(\vec{0}))$ by \cref{construction-choices}. Hence $\theta$ is
  well-defined, and we may choose $\vec{v}_{0}, p_{0}$ as we like. We have the
  following maps:

  \begin{center}
  \begin{tikzcd}[column sep=huge]
    \text{(1)} \arrow[r, shift left, "\text{(\cref{sidon-code})}"] &
    \text{(2)} \arrow[l, shift left, "\text{(\cref{sidon-code})}"]
               \arrow[r, shift left, "\text{$\eta$ (above)}"] &
    \text{(3)} \arrow[l, shift left, "\text{$\theta$ (\cref{construction})}"]
  \end{tikzcd}
  \end{center}

  It remains to show that $\eta$ and $\theta$ are inverses.

  \bigskip

  \noindent
  \emph{Claim.} Let $\mathcal{L}$ be a linear code with radius 1 in $A_{k}$,
  $\mathcal{C} = \eta(\mathcal{L})$, $\mathcal{L}' = \theta(\mathcal{C})$:
  \begin{center}
  \begin{tikzcd}
    \mathcal{L} \arrow[r, mapsto, "\eta"] &
    \mathcal{C} \arrow[r, mapsto, "\theta"] &
    \mathcal{L}'
  \end{tikzcd}
  \end{center}
  Then $\mathcal{L} = \mathcal{L}'$.

  \bigskip

  \noindent \emph{Proof of claim.} Recall from above that $P(\mathcal{C}) =
  A_{k} / \mathcal{L}$, so $\theta$ constructs a labeling $\ell \colon A_{k} \to
  A_{k} / \mathcal{L}$. Choose $\vec{v}_{0} = \vec{0}$, $p_{0} = \vec{0}$, so
  that $\ell(\vec{0}) = \vec{0}$. We claim $\ell(\vec{x}) = \vec{x}$ for all
  $\vec{x} \in A_{k}$, which we prove by induction on $d(\vec{0}, \vec{x})$.

  For points $p \in A_{k} / \mathcal{L}$, we have $\phi_{i}(p) = p - \vec{e}_{i}
  + \vec{e}_{k + 1}$; for lines $l \in A_{k} / \mathcal{L}$, we have
  $\phi_{i}(l) = p + \vec{e}_{i} - \vec{e}_{k + 1}$. Therefore, if
  $\ell(\vec{w}) = \vec{w}$ for $\vec{w} \in A_{k}$, then
  \begin{align*}
    \ell(\vec{w} - \vec{e}_{i} + \vec{e}_{j})
      &= (\phi_{j} \circ \phi_{i})(\ell(\vec{w}))\\
      &= (\phi_{j} \circ \phi_{i})(\vec{w})\\
      &= \vec{w} - \vec{e}_{i} + \vec{e}_{j}.
  \end{align*}
  By induction, $\ell(\vec{x}) = \vec{x}$ for all $\vec{x} \in A_{k}$. The
  linear code $\mathcal{L}'$ is $\ell^{-1}(\ell(\vec{0})) = \ell^{-1}(\vec{0})$,
  which is the kernel of $\ell \colon A_{k} \to A_{k} / \mathcal{L}$, which is
  $\mathcal{L}$. Hence $\mathcal{L} = \mathcal{L}'$.

  \bigskip

  \noindent
  \emph{Claim.} Let $\mathcal{C}$ be a colored $(k + 1)$-configuration,
  $\mathcal{L} = \theta(\mathcal{C})$, $\mathcal{C}' = \eta(\mathcal{L})$:
  \begin{center}
  \begin{tikzcd}
    \mathcal{C} \arrow[r, mapsto, "\theta"] &
    \mathcal{L} \arrow[r, mapsto, "\eta"] &
    \mathcal{C}'
  \end{tikzcd}
  \end{center}
  Then we have an isomorphism $\mathcal{C} \cong \mathcal{C}'$.

  \bigskip

  \noindent
  \emph{Proof of claim.} Let $\ell \colon A_{k} \to P(\mathcal{C})$ be the
  labeling constructed by $\theta$, and then let $\tilde{\ell} \colon A_{k} /
  \mathcal{L} \to P(\mathcal{C})$ be the isomorphism of sets guaranteed by the
  universal property of the quotient. Let $\tilde{\ell}^{-1} \colon
  P(\mathcal{C}) \to A_{k} / \mathcal{L}$ be the inverse of $\tilde{\ell}$.

  We claim $\tilde{\ell}^{-1}(\phi_{i}(l)) = \tilde{\ell}^{-1}(\phi_{j}(l)) +
  \vec{e}_{i} - \vec{e}_{j}$, for all $i, j \in [k + 1]$. To see this, take the
  identity $\ell(\vec{x} - \vec{e}_{i} + \vec{e}_{j}) = (\phi_{j} \circ
  \phi_{i})(\ell(\vec{x}))$, and let
  $$l
    = \phi_{j}(\ell(\vec{x} - \vec{e}_{i} + \vec{e}_{j}))
    = \phi_{i}(\ell(\vec{x})).$$
  Then we have $\vec{x} = \tilde{\ell}^{-1}(\phi_{j}(l)) + \vec{e}_{i} -
  \vec{e}_{j}$, and $\vec{x} = \tilde{\ell}^{-1}(\phi_{i}(l))$. Therefore, we
  have
  $$\tilde{\ell}^{-1}(\phi_{i}(l))
    = \tilde{\ell}^{-1}(\phi_{j}(l)) + \vec{e}_{i} - \vec{e}_{j}.$$

  Now we define a map $\mathcal{C} \to \mathcal{C}'$:
  \begin{itemize}
    \item
    Map $p \in P(\mathcal{C})$ to $\tilde{\ell}^{-1}(p)$ in $P(\mathcal{C}') =
    A_{k} / \mathcal{L}$.

    \item
    Map $l \in L(\mathcal{C})$ to $\tilde{\ell}^{-1}(\phi_{k + 1}(l))$ in
    $L(\mathcal{C}') = A_{k} / \mathcal{L}$.

  \end{itemize}
  Since $\tilde{\ell}^{-1}, \phi_{k + 1}$ are isomorphisms, these maps
  $P(\mathcal{C}) \to P(\mathcal{C}')$ and $L(\mathcal{C}) \to L(\mathcal{C}')$
  are isomorphisms. For incidences, we have the following equivalences:
  \begin{alignat*}{2}
    &\text{$p, l$ incident in $\mathcal{C}$}&&\\
    &\qquad\Leftrightarrow\quad p = \phi_{i}(l),\quad
      &&\text{some $i \in [k + 1]$}\\
    &\qquad\Leftrightarrow\quad \tilde{\ell}^{-1}(p) = \tilde{\ell}^{-1}(\phi_{i}(l)),\quad
      &&\text{some $i \in [k + 1]$}\\
    &\qquad\Leftrightarrow\quad \tilde{\ell}^{-1}(p) = \tilde{\ell}^{-1}(\phi_{k + 1}(l)) + \vec{e}_{i} - \vec{e}_{k + 1},\quad
      &&\text{some $i \in [k + 1]$}\\
    &\qquad\Leftrightarrow\quad\text{$\tilde{\ell}^{-1}(p), \tilde{\ell}^{-1}(\phi_{k + 1}(l))$ incident in $\mathcal{C'}$}&&
  \end{alignat*}
  Hence our map $\mathcal{C} \to \mathcal{C}'$ preserves both incidences and
  colors, which are given by the index $i$ above, so $\mathcal{C} \cong
  \mathcal{C}'$ as desired.

  \bigskip

  By the claims above, $\eta, \theta$ are inverses up to isomorphism.
\end{proof}

Now let $\mathcal{C}$ be the colored $(q + 1)$-configuration obtained from a
planar difference set via \cref{planar-colored}. Then in the theorem above, we
have $G \cong \mathbb{Z}_{q^{2} + q + 1}$, $|B| = q + 1$, and $\mathcal{L}$
perfect. This case of the correspondence between (1) and (2) is discussed in
Section 3 of~\cite{kovacevic2019sidon}; in particular, we recover a result of
Singer on Sidon sets (\cite{singer1938theorem}; see~\cite{bose1962theorems}). We
also obtain the following corollary:

\begin{corollary}
\label{correspondence-projective}
  We have a one-to-one correspondence (up to isomorphism) between the following
  two structures:
  \begin{enumerate}[(1)]
    \item
    Pairs $(G, A)$, where $G$ is an abelian group, and $A$ is a planar
    difference set in $G$ with $|A| = k$.

    \item
    Colored $k$-configurations $\mathcal{C}$ which are also projective
    planes.

  \end{enumerate}
\end{corollary}

\begin{proof}
  Replace $k$ with $k + 1$, and restrict the correspondence between (1) and (3)
  in \cref{correspondence} to the case $n = k^{2} + k + 1$. Then the result
  follows from the characterizations given by \cref{characterize-projective} and
  \cref{characterize-planar}.
\end{proof}

\section{Directions for further research}

By \cref{correspondence-projective}, we can restate open problems on planar
difference sets. For example, the conjecture that all planar difference sets are
cyclic becomes:

\cyclic

The conjecture that all planar difference sets are Desarguesian becomes:

\begin{conjecture}
  Let $\mathcal{C}$ be a colored $k$-configuration which is also a projective
  plane. Then $\mathcal{C}$ is Desarguesian (that is, isomorphic to $PG(2,
  \mathbb{F}_{q})$).
\end{conjecture}

More generally, we can ask the following existence question:

\begin{question}
  Which connected $k$-configurations $\mathcal{C}$ can be colored to form a
  colored $k$-configuration?
\end{question}

In a different direction, our simplicial complex construction $X(\mathcal{C})$
(\cref{quotient}) suggests possible topological obstructions to the existence of
not only Sidon sets and linear codes, but also planar difference sets and finite
projective planes. One possible obstruction is via the following conjecture:

\minimumcyclic

Along with \cref{planar-colored}, this conjecture implies that every cyclic
planar difference set has prime power order, an open problem in design theory
(see~\cite{beth1999design}, Chapter~VII). The free $\mathbb{Z}_{n}$-action
condition is necessary, since there exist simplicial complexes $X$ with
$\pi_{1}(X) \cong \mathbb{Z}^{k}$ on only $O(k)$
vertices~\cite{frick2021vertex}.

\section{Acknowledgements}

We thank Florian Frick for showing the author what became the motivating
observation of this work, that the 7-vertex triangulation of $T^{2}$ consists of
two copies of $PG(2, \mathbb{F}_{q})$, and for many helpful conversations
throughout.


\printbibliography

\end{document}